\documentclass[a4paper,fleqn]{cas-sc}

\usepackage[longnamesfirst]{natbib}
\usepackage{lipsum}
\usepackage{amsfonts}
\usepackage{graphicx}
\usepackage{epstopdf}
\usepackage{algorithm}%
\usepackage{algorithmicx}%
\usepackage{algpseudocode}%
\usepackage{mathtools}
\usepackage{enumitem}
\usepackage{nicefrac}
\usepackage{booktabs}
\usepackage{float}
\usepackage[rightcaption]{sidecap}
\usepackage{placeins}
\usepackage{amsopn}
\usepackage{caption}
\usepackage{amsmath}
\usepackage{amssymb}
\usepackage{amsthm}

\usepackage{hyperref}
\usepackage[capitalise,nameinlink,noabbrev]{cleveref}

\crefname{hypothesis}{Hypothesis}{Hypotheses}
\crefname{fact}{Fact}{Facts}

\algrenewcommand{\algorithmicensure}{\textbf{Return:}}

\newcommand\restr[2]{{% we make the whole thing an ordinary symbol
  \left.\kern-\nulldelimiterspace % automatically resize the bar with \right
  #1 % the function
  \vphantom{\big|} % pretend it's a little taller at normal size
  \right|_{#2} % this is the delimiter
  }}

\theoremstyle{plain}
\newtheorem{theorem}{Theorem}[section] % Se numera por sección (ej. Theorem 1.1)
\theoremstyle{definition}
\newtheorem{definition}{Definition}[section]

\theoremstyle{remark}

\def\tsc#1{\csdef{#1}{\textsc{\lowercase{#1}}\xspace}}
\tsc{WGM}
\tsc{QE}
\tsc{EP}
\tsc{PMS}
\tsc{BEC}
\tsc{DE}
\numberwithin{equation}{section}

\begin{document}
\let\WriteBookmarks\relax
\def\floatpagepagefraction{1}
\def\textpagefraction{.001}
\shorttitle{Well-Balanced Schemes for Hyperbolic Kinetic Relaxation}
\shortauthors{L. Ávila León et~al.}
%\begin{frontmatter}

\title [mode = title]{Well-Balanced Schemes for Hyperbolic Kinetic Relaxation}

\author[1]{León~Miguel Avila~León}[type=,
    auid=,bioid=,
    prefix=,
    role=,
    orcid=0009-0003-8458-7020]
\cormark[1]
% \fnmark[1]
\ead{leonavilaleon@uma.es}
%\ead[url]{www.jkkrishnan.in}

% \credit{Conceptualization of this study, Methodology, Software}

\author[1]{Manuel Jesús {Castro Díaz}}[orcid=0000-0003-3164-7715]
% \fnmark[3]
\ead{mjcastro@uma.es}
% \ead[URL]{https://www.university.org}

\affiliation[1]{organization={Departamento de Análisis Matemático, Estadística e I.O. y Matemática Aplicada, Facultad de Ciencias, Universidad de Málaga, 29010},
    city={Málaga},
    country={Spain}}

% \credit{Data curation, Writing - Original draft preparation}

\affiliation[2]{organization={Université de Strasbourg, CNRS, Inria, IRMA, F-67000},
    city={Strasbourg},
    country={France}}

% \author[2]{Emmanuel Franck}[orcid=]
% % \cormark[4]
% % \fnmark[4]
% \ead{emmanuel.franck@inria.fr}
% % \ead[URL]{www.campus.in}

\author[1]{Jose María Gallardo}[orcid=0000-0002-6586-178X]
% \fnmark[3]
\ead{jmgallardo@uma.es}

\author[2]{Victor Michel-Dansac}[type=,
    auid=,bioid=,
    prefix=,
    role=,
    orcid=0000-0002-3859-8517]

\ead{victor.michel-dansac@inria.fr}

\cortext[cor1]{Corresponding author}

\begin{abstract}
    This paper introduces a novel class of well-balanced numerical schemes for hyperbolic systems of balance laws in 1D using the kinetic relaxation framework. Kinetic relaxation methods approximate nonlinear hyperbolic systems via a linear system of transport equations where nonlinearities are relegated to the source term. While these methods simplify the Riemann problem, they fail to preserve the discrete steady-state solutions of the original system because of the modified source term. We propose two distinct strategies to achieve the well-balanced property: a modified Finite Volume (FV) approach using well-balanced reconstruction operators and a well-balanced Semi-Lagrangian~(SL) approach based on characteristic backtracking. Both methods ensure that the steady solutions of the original system remain invariant during the transport and projection steps. We demonstrate the efficacy of these schemes with first-, second-, and third-order accuracy using palindromic splitting techniques. Numerical results for Burgers' equation, the Shallow Water Equations with bathymetry, and the Euler equations with gravitational potential confirm that our relaxation schemes maintain steady states to machine precision while capturing complex transient dynamics.
\end{abstract}

\begin{keywords}
    Kinetic Relaxation \sep Well-Balanced \sep Hyperbolic System of Balance Laws\sep Finite Volume\sep Semi-Lagrangian
\end{keywords}

\maketitle

\section{Introduction}
Let us consider a 1D hyperbolic balance laws of the form
\begin{equation}
    \label{balance law}
    \partial_t u + \partial_x F(u) = S(u,x).
\end{equation}

Here,
\begin{itemize}[nosep]
    \item $u: (x, t) \mapsto u(x, t)$ takes values on an open convex set $\Omega \subset \mathbb{R}^m$;
    \item $F : \Omega \to \mathbb{R}^m$ is the physical flux function, assumed to have sufficient regularity and to be such that the system is hyperbolic (i.e., the Jacobian matrix $\nabla_u F(u)$ is diagonalizable with real eigenvalues for all $u \in \Omega$);
    \item and the source term is written in the form $S(u,x)$, where $S : \Omega\times \mathbb{R} \to \mathbb{R}^m$.
\end{itemize}
\medskip
The previous problem poses several theoretical and computational challenges. A primary difficulty inherent in these systems is the loss of regularity of solutions; when the physical flux $F$ is nonlinear, discontinuities such as shock waves develop in finite time, even when starting from smooth initial conditions. Capturing these weak solutions without introducing spurious oscillations or excessive numerical diffusion requires robust high-resolution methods. Beyond the treatment of discontinuities, a highly desirable feature in the design of numerical schemes for balance laws is the Well-Balanced (WB) property. A scheme is considered well-balanced if it is capable of exactly preserving the steady-state solutions of the system, where the flux gradient and the source term reach an exact equilibrium, down to the order of machine precision. The preservation of such equilibria is fundamental across a wide range of engineering and geophysical applications.
For instance, in geophysical fluid dynamics, it is imperative to maintain the lake at rest state in the Shallow Water Equations (SWE); in astrophysics simulations, hydrostatic equilibria in atmospheric and astrophysical models under gravitational potentials have to be preserved.
Schemes lacking the WB property generate truncation errors of the same order as the mesh size. While these errors vanish as the mesh is refined, they are often orders of magnitude larger than the small physical perturbations being studied, such as seismic waves in the ocean or subtle pressure variations in a star, thereby rendering the numerical results invalid over long-time integration.
\medskip
\newline
On the one hand, the development of well-balanced schemes has been an active area of research in computational fluid dynamics over the past three decades.
Starting with the seminal work of Berm{\'u}dez \& V{\'a}zquez \cite{BerVaz1994} and Greenberg \& LeRoux \cite{GreLeR1996}, who introduced the concept of well-balanced schemes for the SWE, a variety of techniques have been developed to achieve the WB property. These include hydrostatic reconstruction methods~\cite{AudBouBriKlePer2004}, high-order finite volume methods~\cite{GalParCas2007}, or high-order discontinuous Galerkin methods~\cite{XinZhaShu2010}.
Beyond these initial works, mostly focused on preserving steady solutions at rest of the SWE, the literature on well-balanced schemes has expanded significantly, encompassing a wide range of balance laws and applications.
For instance, we mention a series of works dedicated to preserving moving steady solutions of the SWE, like~\cite{XinShuNoe2011,MicBerClaFou2016}, or of other systems, see e.g.~\cite{MicBerClaFou2017} for the SWE with friction,~\cite{DesMas2021} for the SWE with Coriolis force, or~\cite{BerMicTho2026} for the Euler equations with gravitational potential.
Another avenue of improvement concerned high-order accuracy, with the design of high-order well-balanced reconstruction operators, see e.g.~\cite{castro2008well,castro2020well}, and its extensions, see e.g.~\cite{gomez2021collocation,gomez2021high,gomez2021implicit}. We also mention high-order well-balanced methods based on other techniques or applied to other systems of equations, see e.g.~\cite{XinShu2012,FraMen2016,BriXin2020,BerBulFouMbaMic2022,AbgLiu2024,DumZanGabPes2024,FraMicNav2024,KazParRic2025}. Finally, regarding finite difference approximations, pioneering well-balanced schemes were introduced by Xing and Shu~\cite{XingShu2005}, a framework later extended to other balance laws, see e.g.~\cite{WangShuYeeSjogreen2009}.

\medskip

On the other hand, to help with the treatment of nonlinearities in the flux, we focus on the kinetic relaxation approach. It was pioneered in \cite{Bou1999} by Bouchut and in \cite{AuPe00} by Aregba-Driollet and Natalini, and it offers an elegant alternative to solving the nonlinear hyperbolic balance law. By introducing a set of linear transport equations with a stiff relaxation source term, the nonlinearity of the flux of the nonlinear system is effectively relaxed towards the source term of the relaxation system. This simplifies the characteristic structure of the new system (as it becomes made of linear transport equations with constant velocities). This makes the kinetic system computationally efficient and naturally suited for parallelization, at the cost of additional unknowns. However, a difficulty in such methods is the preservation of the steady states of the original system, as the stationary solutions of the kinetic system do not coincide with those of the balance law.
\medskip
\newline
In this article, we propose a way to make kinetic relaxation techniques well-balanced. The novelty of our approach lies in the development of two distinct methodologies to ensure the WB property. In the {Finite Volume Relaxation approach}, we employ high-order reconstruction operators (up to third-order accurate) that are specifically designed to preserve the steady states of the initial hyperbolic system, even while evolving the relaxed linear variables. In the {Semi-Lagrangian Relaxation approach}, we use the method of characteristics to solve the transport stage. The well-balanced property is then achieved by carefully choosing the reconstruction at the foot of the characteristic curves and modifying the evolution of the original variable based on the physical source term. To validate these approaches, we address explicit and implicit formulations for FV (1st, 2nd, and 3rd order) and SL (1st order). For higher-order temporal accuracy, we implement Strang splitting and multi-stage time discretizations. The robustness of these schemes is validated through classical test cases: the Burgers' equation with a nonlinear source term, the SWE with non-flat bathymetry (where moving steady states are preserved), and the Euler equations under a gravitational potential.
\medskip
\newline
The paper is organized as follows. \Cref{sec:preliminaries} first described the required preliminaries on kinetic relaxation and on the well-balanced property. Then, the main contributions of the article are presented in \cref{sec:WB_kinetic_relaxation}. Namely, we describe the algorithm for the finite volume and semi-Lagragian WB kinetic relaxation in \cref{sec:HKR-FV-WB,sec:HKR-SL-WB}, respectively. Finally, numerical experiments are reported in \cref{sec:numerical experiments}, and the conclusions follow in \cref{sec:conclusions}.

\section{Preliminaries}
\label{sec:preliminaries}

This section introduces the kinetic framework for systems of balance laws (in \cref{sec:kinetic_relaxation}) and the concept of exactly well-balanced reconstruction operators (in \cref{sec:well_balanced_procedure}).
These constitute the building blocks for the design of the well-balanced kinetic relaxation schemes presented in \cref{sec:WB_kinetic_relaxation}.

\subsection{Kinetic Relaxation for One-Dimensional Systems of Balance Laws}
\label{sec:kinetic_relaxation}

The kinetic relaxation framework is well-established for systems of balance laws in multiple space dimensions (which are treated in e.g.~\cite{coulette2019high,GerHelMic2022,GerHelMicWeb2024}).
Despite that, for simplicity and clarity, we restrict ourselves to the one-dimensional case in this paper, and we thus present this framework for the system \eqref{balance law}.

The goal is to approximate the nonlinear system of balance laws with a linear system of kinetic equations with discrete velocities and a relaxation source term.
In dimension $d$, the system contains $(d+1)$ systems of equations;
in our one-dimensional setting, this reduces to the following system of equations:
\begin{equation}
    \begin{dcases}
        \partial_t f_+ + \lambda \partial_x f_+ = g_+(u) + \frac{1}{\tau}\big(m_+(u) - f_+\big), \\
        \partial_t f_- - \lambda \partial_x f_- = g_-(u) + \frac{1}{\tau}\big(m_-(u) - f_-\big).
    \end{dcases}
    \label{eq:kinetic_system}
\end{equation}
In \eqref{eq:kinetic_system}, we have defined:
\begin{itemize}[nosep]
    \item A (small but nonzero) relaxation time $\tau > 0$.
    \item A set of constant vectors, the kinetic velocities\footnote{This choice is commonly called  D1Q2 in \cite{QiaHumLal1992} in the lattice-Boltzmann literature; other choices include D1Q3, where $\mathcal{V}=(-\lambda, 0, \lambda) \in \mathbb{R}^3$.} $\mathcal{V}=(-\lambda, \lambda) \in \mathbb{R}^2$.
    \item The two kinetic unknowns $f_- \in \mathbb{R}^{m}$ and $f_+ \in \mathbb{R}^{m}$.
    \item The two kinetic equilibrium functions $m_{-}: \mathbb{R}^{m} \rightarrow \mathbb{R}^{m}$ and $m_{+}: \mathbb{R}^{m} \rightarrow \mathbb{R}^{m}$.
    \item The two kinetic source terms $g_{-}: \mathbb{R}^{m} \rightarrow \mathbb{R}^{m}$ and $g_{+}: \mathbb{R}^{m} \rightarrow \mathbb{R}^{m}$.
\end{itemize}

\medskip

We now seek expressions of $m_\pm$ and $g_\pm$ to recover the original system of balance laws in the limit $\tau \to 0$.
To that end, we make the consistency assumption
\begin{equation}
    f_+ + f_- = u. \label{eq:consistency_f}
\end{equation}
Then, we sum both kinetic equations and use \eqref{eq:consistency_f}, to get
\begin{equation}
    \partial_{t} u
    +
    \lambda \partial_x(f_+ - f_-)
    =
    g_+(u,x) + g_-(u,x) + \frac{1}{\tau}\left(m_+(u) + m_-(u) - u\right).
\end{equation}
For the relaxation source term to vanish and to recover the flux function of the original system, we obtain the following conditions on the kinetic equilibrium functions:
\begin{align}
    m_+(u) + m_-(u)                 & = u, \label{eq:consistency_u}       \\
    \lambda m_+(u) - \lambda m_-(u) & = F(u), \label{eq:consistency_flux}
\end{align}
These conditions lead to the following definition of the kinetic equilibrium functions:
\begin{equation*}
    m_\pm(u) = \frac{u}{2} \pm \frac{F(u)}{2\lambda}.
\end{equation*}
Moreover, the source term must be consistent with the original source term, which yields
\begin{equation}
    g_+(u,x) + g_-(u,x) = S(u,x). \label{eq:consistency_source}
\end{equation}
Further, a Chapman-Enskog expansion presented in \cite{coulette2019high} suggests defining the kinetic source terms as:
\begin{equation}
    g_\pm(u,x) = (\nabla_{u}m_\pm(u)) \, S(u,x),
\end{equation}
which satisfies the consistency condition since
\begin{equation*}
    g_-(u,x) + g_+(u,x) = (\nabla_u m_-(u) + \nabla_u m_+(u)) \, S(u,x) = (\nabla_u u) \, S(u,x) = S(u,x).
\end{equation*}

\medskip
The linear stability of the kinetic relaxation scheme is governed by the \textit{subcharacteristic condition}, see e.g.~\cite{CheLevLiu1994}.
This requirement ensures that the kinetic velocities transport information faster than the physical waves, and is akin in spirit to a CFL condition.
To satisfy this condition, the kinetic velocities should satisfy
\begin{equation}
    \lambda \geq \max_{i=1,\dots, d} \{|\lambda_i|\},
    \label{eq:subchar_condition}
\end{equation}
where $\lambda_i$ are the characteristic velocities of the initial hyperbolic system (i.e., the eigenvalues of the Jacobian matrix $\nabla_u F(u)$).

To approximate the solutions of the vectorial kinetic representation for systems of balance laws, we employ a splitting method, see e.g. \cite{McLQui2002,coulette2019high}.
This approach decouples the transport phenomena from the relaxation and source term effects.
The problem is split into the following two distinct steps, solved sequentially.
We assume that the kinetic variables $f_\pm^n$ approximated at time $t^n$ are known\footnote{To compute them at the initial time, we set $f_\pm^0 = m_\pm(u^0)$ with $u^0$ the initial condition of \eqref{balance law}.}, and we seek to compute $f_\pm^{n+1}$ at time $t^{n+1}=t^n + \Delta t$.

\medskip

\begin{enumerate}[label=\textbf{\Alph*.}, leftmargin=*]
    \item \textbf{Transport Step:} We start by solving the linear homogeneous transport equation satisfied by both kinetic unknowns $f_+$ and $f_-$:
          \begin{equation}
              \partial_t f_\pm \pm \lambda \partial_x f_\pm = 0.
              \label{eq:transport_step}
          \end{equation}
          This step may be completed with different techniques. In this paper, we use, on the one hand, a FV scheme with an upwind numerical flux and, on the other hand, a Semi-Lagrangian approach based on the characteristic method. Solving \eqref{eq:transport_step} yields the transported kinetic variables $f_\pm^{n,*}$, and we recover the transported original variable $u^{n,*}$ by summing both kinetic variables.
          \medskip
    \item \textbf{Relaxation-Source Step:} We are left with solving a time-dependent ordinary differential equation (ODE), which accounts for the kinetic source term $g_k(u)$ and the relaxation source term:
          \begin{equation}
              \partial_{t}f_\pm = g_\pm(u,x) + \frac{1}{\tau}(m_\pm(u)-f_\pm).
              \label{eq:relaxation_source_step}
          \end{equation}
          First, by summing both equations in \eqref{eq:relaxation_source_step}, and using the consistency properties \eqref{eq:consistency_f}, \eqref{eq:consistency_u} and \eqref{eq:consistency_source}, we recover the evolution equation related to the source term
          \begin{equation}
              \partial_{t}u = S(u,x).
              \label{eq:macro_evolution}
          \end{equation}
          This equation is then approximated any ODE solver (e.g., a Crank-Nicolson scheme to ensure second-order accuracy in time).
          This leads to solving a potentially nonlinear system, starting from the transported state $u^{n,*}$ to find $u^{n+1,*}$.
          Equipped with $u^{n+1,*}$, we discretize \eqref{eq:relaxation_source_step} with an ODE solver (here, the Crank-Nicolson method again), to obtain
          \begin{equation*}
              f_\pm^{n+1} = (1-\omega)f_\pm^{n,*} + \omega \tau \frac{g_\pm(u^{n,*},x)+g_\pm(u^{n+1,*},x)}{2} + \omega \frac{m_\pm(u^{n,*})+m_\pm(u^{n+1,*})}{2},
              \label{eq:CN-relaxation 1}
          \end{equation*}
          where we have set
          \begin{equation}
              \omega = \frac{\nicefrac{\Delta t}{\tau}}{1 + \nicefrac{\Delta t}{2\tau}}.
          \end{equation}
          In the limit $\tau \to 0$, we observe that $\omega \to 2$, which is the well known over-relaxation regime.
          Choosing $\tau$ such that $\tau = \mathcal{O}(\Delta t^2)$ leads to $\omega = 2 - \mathcal{O}(\Delta t)$ for some constant~$C$, which ensures that the term involving $\tau g_\pm$ can be neglected up to second order. The final update reads:
          \begin{equation}
              f_{k}^{n+1} = (1-\omega)f_{k}^{n,*} + \omega \frac{m_{k}(u^{n,*})+m_{k}(u^{n+1,*})}{2}.\label{eq:CN-relaxation 2}
          \end{equation}
          This scheme is stable, second-order accurate, and avoids the explicit computation of the kinetic source terms $g_k(u)$ (and thus the Jacobian of the flux).
\end{enumerate}

\medskip
While the transport and relaxation steps described above can be individually discretized with high-order methods, simply solving them sequentially restricts the global temporal accuracy to first order. This reduction in accuracy arises from the splitting error, which is proportional to the commutator of the convection and source operators \cite{holden2010splitting}. To overcome this limitation and achieve second-order accuracy in time, while remaining consistent with the spatial discretization, we employ a symmetric Strang splitting method \cite{strang1968construction}. This approach symmetrizes the sequence of operators, requiring a half-step of transport, a full step of relaxation, and a final half-step of transport. Thus, a second-order accurate splitting scheme is given by
\begin{equation}
    \mathcal{S}_{2}(\Delta t) = \mathcal{T}\left(\frac{\Delta t}{2}\right) \circ \mathcal{R}(\Delta t) \circ \mathcal{T}\left(\frac{\Delta t}{2}\right), \label{strang_splitting}
\end{equation}
where $\mathcal{T}$ and $\mathcal{R}$ denote the transport and relaxation operators, respectively.
Higher-order accuracy in time is then achieved, according to \cite{coulette2019high}, by considering suitable combinations of \eqref{strang_splitting}.

\subsection{Exactly Well-Balanced Procedure}
\label{sec:well_balanced_procedure}

In this paper, we tackle so-called exactly well-balanced methods.
They are designed to exactly preserve the stationary solutions
\begin{equation}
    \partial_x F(u^e)=S(u^e,x),\label{stationary solution}
\end{equation}
of the original system of balance laws \eqref{balance law}, up to machine precision.
One way of constructing explicit or implicit high-order well-balanced methods is through well-balanced reconstruction operators, see e.g.~ \cite{castro2020well}.

To construct such operators, consider a uniform space discretization made of cells $I_i = (x_{i-\nicefrac{1}{2}},x_{i+\nicefrac{1}{2}})_{i\in\mathbb{Z}}$ with $\Delta x = x_{i+\nicefrac{1}{2}} - x_{i-\nicefrac{1}{2}}$ constant for all $i$.
Moreover, we denote by $(\alpha_m, \omega_m)_{m\in\{1,\dots,M\}}$ the nodes and weights of a quadrature formula of order~$p \geq 1$ in the interval $[0, 1]$.
On the $i$\textsuperscript{th} cell, the quadrature nodes are thus given by $x_{i}^{m}=x_{i-\nicefrac{1}{2}}+\alpha_{m}\Delta x$.
Then, one defines the approximate cell averages $\bar{u}_{i}$ of a function $u$, using the quadrature formula, as
\begin{equation*}
    \bar{u}_{i} = \sum_{m=1}^{M}\omega_{m}u(x_{i}^{m})=\frac{1}{\Delta x}\int_{x_{i-\nicefrac{1}{2}}}^{x_{i+\nicefrac{1}{2}}}u(x)dx +O(\Delta x^p).
\end{equation*}
A reconstruction operator provides approximations of $u$ at any point $x$ of the $i$\textsuperscript{th} cell:
\begin{equation}
    \forall i \in \mathbb{Z}, \quad
    \forall x \in I_i, \qquad
    P_{i}(x;\{\bar{u}_{j}\}_{j\in\mathcal{S}_{i}}) = u(x) + O(\Delta x^s),
\end{equation}
where $\mathcal{S}_{i}$ is a stencil of cells around the $i$\textsuperscript{th} cell, and $s>1$ is the order of the approximation.

These reconstruction operators are obtained by interpolation from the cell averages of the stencil. Some high-order reconstruction operators include MUSCL~\cite{Lee1979}, CWENO~\cite{CraPupSemVis2017} or CWENOZ~\cite{SemTraPup2021}.
We can now define what would make such an operator exactly well-balanced.

\begin{definition}
    \label{def:well_balanced_reconstruction_operator}
    Given a stationary solution $u^{e}$ of \eqref{balance law} satisfying \eqref{stationary solution}, a reconstruction operator $P_{i}$ is said to be exactly well-balanced for $u^{e}$ if
    \begin{equation}
        \forall i \in \mathbb{Z}, \quad
        \forall x \in I_i, \qquad
        P_{i}(x;\{\bar{u}_{j}^{e}\}_{j\in\mathcal{S}_{i}}) = u^{e}(x),
    \end{equation}
    where $\bar{u}_{j}^{e}$ represent the cell-averages obtained from the stationary solution $u^{e}$ by a quadrature formula (or the exact cell-averages if available).
\end{definition}

In general, a standard reconstruction operator is not well-balanced, so we can use the strategy introduced in \cite{castro2008well} to obtain a well-balanced operator $P_{i}$ from a standard one $R$.
It is described in \cref{alg:WB_reconstruction}.
It can be easily proved that the reconstruction operator $P_{i}$ obtained from \cref{alg:WB_reconstruction} is exactly well-balanced provided that $Q_{i}$ is exact for the zero function, see~\cite{castro2008well}.
Note that, in every cell, a nonlinear problem has to be solved in \eqref{alg2-steady}.
It consists in finding a stationary solution in the stencil of the cell with the same average value in the cell. To that end, having a closed-form expression (or an implicit algebraic definition) of the stationary solutions is required.
If no such expression exists, reconstruction operators that are (not exactly) well-balanced can be designed, by numerically solving ODE system \eqref{stationary solution}, see e.g. \cite{gomez2021collocation,gomez2021high,gomez2021well}.

\begin{algorithm}[!ht]
    \caption{Well-Balanced Reconstruction Operator}
    \label{alg:WB_reconstruction}
    \begin{algorithmic}[1]
        \Require Cell averages $\{\bar{u}_i\}$, stencils $\mathcal{S}_i$,
        quadrature nodes and weights $(x_j^m, \omega_m)_{m\in\{1,\dots,M\}}$,
        standard reconstruction operator $R$
        \Ensure Well-balanced reconstruction operator $P_i$

        \For{each cell $i$}

        \State Attempt to find a stationary solution
        $u_i^e$ defined on
        $\mathcal{S}_i$
        such that
        \begin{equation}\label{alg2-steady}
            \sum_{m=1}^{M} \alpha_m \, u_i^e(x_i^m) = \bar{u}_i.
        \end{equation}

        \If{no such solution exists}
        \State $u_i^e \gets 0$
        \EndIf

        \medskip

        \State Compute the fluctuations $\{v_{j}\}_{j\in\mathcal{S}_{i}}$, given by
        \begin{equation*}
            v_{j} = \bar{u}_{j} - \sum_{m=1}^{M}\alpha_{m}u_{i}^{e}(x_{j}^{m}).
        \end{equation*}

        \medskip

        \State Finally, define the well-balanced reconstruction operator by
        \begin{equation}
            \label{wb rec op}
            P_i(x) = u_i^e(x) + Q_i\big(x;\{v_{j}\}_{j\in\mathcal{S}_{i}}\big).
        \end{equation}
        \EndFor
    \end{algorithmic}
\end{algorithm}

% \medskip
% \newline
Equipped with the exactly well-balanced reconstruction operator, the general semi-discrete numerical method for solving \eqref{balance law} reads, for all $i \in \mathbb{Z}$,
\begin{equation} \label{eq:FV_WB}
    \begin{aligned}
        \frac{d\bar u_{i}}{dt} =
         & -\frac{1}{\Delta x}(F_{i+\nicefrac{1}{2}}(t)-F_{i-\nicefrac{1}{2}}(t))
        + \frac{1}{\Delta x}\left(F(u_{i}^{e,t}(x_{i+\nicefrac{1}{2}})) - F(u_{i}^{e,t}(x_{i-\nicefrac{1}{2}}))\right) \\
         & + \sum_{m=1}^{M}\omega_{m}\left(S(P_{i}^{t}(x_{i}^{m}),x_i^m) - S(u_{i}^{e,t}(x_{i}^{m}),x_i^m)\right),
    \end{aligned}
\end{equation}
where
\begin{itemize}[nosep]
    \item $P_{i}^{t}$ is the well-balanced reconstruction obtained from the cell averages $\{\bar{u}_{i}(t)\}$, following \cref{alg:WB_reconstruction};
    \item $u_{i}^{e,t}$ is the stationary solution in the $i$\textsuperscript{th} cell computed in \cref{alg:WB_reconstruction} from the cell averages  $\{\bar{u}_{i}(t)\}$;
    \item $F_{i+\nicefrac{1}{2}}(t)=\mathbb{F}(u_{i+\nicefrac{1}{2}}^{t,-},u_{i+\nicefrac{1}{2}}^{t,+})$, where $\mathbb{F}$ is any consistent numerical flux for the homogeneous system, and
          \begin{equation*}
              u_{i+\nicefrac{1}{2}}^{t,-}=P_{i}^{t}(x_{i+\nicefrac{1}{2}}), \quad u_{i+\nicefrac{1}{2}}^{t,+}=P_{i+1}^{t}(x_{i+\nicefrac{1}{2}}).
          \end{equation*}
\end{itemize}
\noindent
This numerical method is exactly well-balanced in the sense that, given any stationary solution~$u^{e}$, the set of cell values $\{\bar{u}_{i}^{e}\}$ is an equilibrium of the ODE system \eqref{eq:FV_WB}: see~\cite{castro2020well}.
The above procedure was written for explicit schemes.
To produce an exactly well-balanced implicit scheme, one must properly modify \eqref{wb rec op} and \eqref{eq:FV_WB}, see \cite{gomez2021implicit}.

\medskip

\section{Exactly Well-Balanced Kinetic Relaxation methods}
\label{sec:WB_kinetic_relaxation}
% \medskip
% \newline
Designing exactly well-balanced schemes within the kinetic relaxation framework is more complex than in the setting of the balance law \eqref{balance law}.
In standard finite volume formulations applied to \eqref{balance law}, the numerical flux is constructed to strictly balance the physical flux~$F(u)$ and the source term $S(u,x)$ that define the stationary solutions.
However, the kinetic relaxation approach replaces the nonlinear physical flux with a set of linear transport operators. The main difficulty is that a steady state $u^e$ satisfying \eqref{stationary solution} is no longer a stationary solution of the relaxed kinetic system \eqref{eq:kinetic_system}.
% Consequently, the standard initialization of kinetic variables using the kinetic equilibrium functions means that the linear transport step will not preserve this state; the solution to the kinetic equation tends to drift away from the macroscopic equilibrium due to the structural mismatch between the transport operator and the physical source term.
Therefore, this section's purpose is to devise a mechanism that enforces the invariance of $u^e$ during the transport and projection stages, without compromising the consistency or the transient dynamics of the scheme for non-stationary evolution.

% \medskip
The main idea relies on splitting the kinetic variables into two components: a stationary part that corresponds to the projection of the stationary solution onto the kinetic unknown space, and a fluctuation part that captures the deviation from this stationary state.
Let us first explain the strategy in a continuous setting.
We get
\begin{equation}
    \label{kinetic stationary proyection}
    f_\pm=f_\pm^e+(f_\pm-f_\pm^e)
    \text{, \qquad with \quad}
    f_\pm^e=m_\pm(u^e),
\end{equation}
with $u^e$ being the stationary solution that solves \eqref{stationary solution}.
Then, we evolve $f_\pm$ and $f_\pm^e$ along one another.
We obtain the following four transport equations
\begin{equation}
    \label{eq:kinetic_system_well_balanced}
    \begin{dcases}
        \partial_tf_\pm^e \pm \lambda \partial_xf_\pm^e
        =g_\pm^e+\frac{1}{\tau}\Big(m_\pm(u^e)-f_\pm^e\Big), \\
        \partial_tf_\pm \pm \lambda \partial_xf_\pm
        =g_\pm+\frac{1}{\tau}\Big(m_\pm(u)-f_\pm\Big).
    \end{dcases}
\end{equation}
Note that $f^e_\pm$ does not actually depend on time, since $u^e$ is a stationary solution of~\eqref{balance law}, but we still write its time derivative to keep the same structure as the second equation.
Furthermore, we have committed a slight abuse of notation, since $\lambda$ does not necessarily satisfy the subcharacteristic condition (in practice, we take the largest velocity corresponding to either the stationary solution or the transient solution, to ensure the characteristic condition).
% We consider the most restricted kinetic velocity to ensure the characteristic condition, although later in practice we will see that establishing the algorithm locally does not impose an extra restriction, since it is verified that the local stationary solution in a given cell must have the same average as the solution.

We then subtract the first equation of \eqref{eq:kinetic_system_well_balanced} from the second, yielding
\begin{equation*}
    \partial_t(f_\pm-f_\pm^e) \pm \lambda \partial_x(f_\pm-f_\pm^e)=g_\pm-g_\pm^e+\frac{1}{\tau}\Big(m_\pm(u)-m_\pm(u^e)-f_\pm+f_\pm^e\Big).
\end{equation*}
Using \eqref{kinetic stationary proyection}, we get
\begin{equation}\label{kinetic fluctuation equation}
    \partial_t(f_\pm-f_\pm^e) \pm \lambda \partial_x(f_\pm-f_\pm^e)=g_\pm-g_\pm^e+\frac{1}{\tau}\Big(m_\pm(u)-f_\pm\Big).
\end{equation}
Therefore, the first step to solve the above system using the splitting technique consists in solving the transport step \eqref{eq:transport_step} applied to the fluctuations:
\begin{equation}\label{eq: transport step fluctuations}
    \partial_t(f_\pm-f_\pm^e) \pm \lambda \partial_x(f_\pm-f_\pm^e)=0.
\end{equation}
Afterwards, the second step is the projection-source one, starting with the analogue to \eqref{eq:relaxation_source_step}.
Applying the same machinery as the one leading to \eqref{eq:macro_evolution}, and using that $\partial_tu^e=0$, we obtain the following ODE on $u$, taking into account the stationary solution:
\begin{equation}\label{source evolution with stationary}
    \partial_tu=S(u,x)-S(u^e,x).
\end{equation}
After that, the projection step reads just like in \eqref{eq:CN-relaxation 2}, because the kinetic source terms are neglected up to second order.

% \medskip
% \newline
The remainder of this section is dedicated to the discretization of the above strategy.
More specifically, we propose a finite volume scheme in \cref{sec:HKR-FV-WB} and a semi-Lagrangian scheme in \cref{sec:HKR-SL-WB}.
% Now, let's explain with detail the strategy briefly proposed in this paragraph, considering the local stationary solutions in each cells, and setting specifically the schemes we propose on one hand in Finite Volume framework, and on the other hand in Semi-Lagrangian point of view.
There, we address the numerical resolution of the system \eqref{eq: transport step fluctuations}--\eqref{source evolution with stationary}--\eqref{eq:CN-relaxation 2} on the space-time domain $[a,b] \times [0,T]$. The spatial domain is discretized using a uniform mesh consisting of $N_x$ equally spaced cells. A particular consideration must be made regarding the boundary treatment: although the physical flux of the hyperbolic system may possess a single characteristic direction, the underlying kinetic relaxation schemes rely on a set of discrete kinetic velocities that inherently propagate information in both directions. Consequently, the numerical evaluation of fluxes at the boundaries requires information from ghost cells on both sides of the domain. In the numerical validation section, we will restrict our focus to standard periodic and free-flow boundary conditions. A detailed analysis of complex boundary procedures for these kinetic schemes lies beyond the scope of this paper and will not be discussed further.

\subsection{Finite Volume Exactly Well-Balanced Kinetic Relaxation}
\label{sec:HKR-FV-WB}

In this section, we focus on the finite volume framework, and we investigate both explicit schemes (in \cref{sec:explicit_WB_FV}) and implicit schemes (in \cref{sec:implicit_WB_FV}).

\subsubsection{Explicit schemes}
\label{sec:explicit_WB_FV}

% ~ \\
% \medskip
% \newline
Let $\{u_i^n\}_{i=1}^{N_x}$ be the set of cell averages\footnote{It was previously denoted by $\{\overline{u}_i^n\}_{i=1}^{N_x}$, but we omit the bar to simplify notation.} at time $t^n$. The goal is to compute the appropriate reconstruction operators. At this stage, two different strategies can be adopted: one may either construct the reconstruction operators directly in the space of kinetic variables or, alternatively, perform the reconstruction on the physical variables and subsequently recover the kinetic distributions by projecting onto the equilibrium functions. We select the second strategy, as it offers a significant reduction in computational cost while maintaining the efficiency.

We start by presenting the explicit scheme. Let
\begin{equation}
    \label{eq:kinetic_reconstruction}
    P_{\pm,i}^n(x)=m_{\pm}(P_i^n(x)),
\end{equation}
be the reconstruction operator in the $i$\textsuperscript{th} cell at time $t^n$ of the kinetic variable $f_\pm$, where~$P_i^n(x)$ is the well-balanced reconstruction operator of the physical variable.
Now, we evolve the transport stage for the fluctuations \eqref{eq: transport step fluctuations} with a semi-discrete finite volume method, using an upwind numerical flux.
To that end, we introduce the notation
\begin{equation*}
    f_{\pm,i}^n\approx\frac{1}{\Delta x}\int_{x_{i-\frac{1}{2}}}^{x_{i+\frac{1}{2}}}f_\pm(x,t^n)dx,
    % \quad f_{\pm,i}^{e,n}(x)=m_\pm(u_i^{e,n}(x)).
    % f_{\pm,i}^n = \sum_{m=1}^{M} \alpha_m \, f_\pm(x_i^m, t^n)
    \text{\qquad and \qquad}
    f_{\pm,i}^{e,n}(x)=m_\pm(u_i^{e,n}(x)).
\end{equation*}
Taking into account that $\partial_tf_\pm^e=0$ in \eqref{eq: transport step fluctuations}, we obtain
% \begin{equation}\label{WB HKR-VF 1}
%     f_{\pm,i}^{n,*}=f_{\pm,i}^n-\frac{\Delta t}{\Delta x}\Big(F_{\pm,i,i+\frac{1}{2}}^n-F_{\pm,i,i-\frac{1}{2}}^n\Big),
% \end{equation}
% where
% \[
%     F_{k,i,i+\frac{1}{2}}^n =
%     \begin{cases}
%         v_k\Big(P^n_{k,i+1}(x_{i+\frac{1}{2}})-f_{k,i}^{e,n}(x_{i+\frac{1}{2}})\Big) & \text{if } v_k < 0, \\
%         v_k\Big(P^n_{k,i}(x_{i+\frac{1}{2}})-f_{k,i}^{e,n}(x_{i+\frac{1}{2}})\Big)   & \text{if } v_k>0,
%     \end{cases}
% \]
% \[
%     F_{k,i,i-\frac{1}{2}}^n =
%     \begin{cases}
%         v_k\Big(P^n_{k,i}(x_{i-\frac{1}{2}})-f_{k,i}^{e,n}(x_{i-\frac{1}{2}})\Big)   & \text{if } v_k < 0, \\
%         v_k\Big(P^n_{k,i-1}(x_{i-\frac{1}{2}})-f_{k,i}^{e,n}(x_{i-\frac{1}{2}})\Big) & \text{if } v_k>0.
%     \end{cases}
% \]
% We can rewrite for simplicity the scheme by
\begin{equation}\label{WB HKR-VF 2}
    f_{\pm,i}^{n,*}
    =
    f_{\pm,i}^n
    \mp
    \lambda \frac{\Delta t}{\Delta x} \Big(F_{\pm,i+\frac{1}{2}}^n-F_{\pm,i-\frac{1}{2}}^n\Big)
    \pm
    \lambda \frac{\Delta t}{\Delta x} \Big(f_{\pm,i}^{e,n}(x_{i+\frac{1}{2}})-f_{\pm,i}^{e,n}(x_{i-\frac{1}{2}})\Big),
\end{equation}
with
\[
    F_{-,i+\frac{1}{2}}^n = P^n_{-,i+1}(x_{i+\frac{1}{2}})
    \text{\qquad and \qquad}
    F_{+,i+\frac{1}{2}}^n = P^n_{+,i}(x_{i+\frac{1}{2}}).
    % F_{k,i+\frac{1}{2}}^n =
    % \begin{cases}
    %     P^n_{k,i+1}(x_{i+\frac{1}{2}}) & \text{if } v_k< 0, \\
    %     P^n_{k,i}(x_{i+\frac{1}{2}})   & \text{if } v_k>0.
    % \end{cases}
\]
Once the kinetic variables $f_{k,i}^{n,*}$ are transported,
% which are the result of $\mathcal{T}(\Delta t)$ according to the notation in \eqref{strang_splitting},
we recover the transported physical variables by taking
\begin{equation*}
    u_i^{n,*}=f_{-,i}^{n,*}+f_{+,i}^{n,*}.
\end{equation*}
These physical variables are then evolved following the ODE \eqref{source evolution with stationary}, considering the original source term applied to both physical and equilibrium variables. If the ODE is discretized in time with the Crank-Nicolson method, we have to solve, for each $i \in \{1, \dots, N_x\}$, the potentially nonlinear system
\begin{equation}\label{evolution source FV}
    u_i^{n+1,*}=u_i^{n,*}+\frac{\Delta t}{2}\Big(S(u_i^{n,*},x_i)+S(u_i^{n+1,*},x_i)-2S(u_i^{e,n},x_i)\Big).
\end{equation}
Finally, after computing $u_i^{n+1,*}$, we just have to complete the projection step by evolving the kinetic variable like in \eqref{eq:CN-relaxation 2}, by setting
\begin{equation}\label{projection FV}
    f_{\pm,i}^{n+1} = (1-\omega)f_{\pm,i}^{n,*} + \omega \frac{m_{\pm}(u_i^{n,*})+m_{\pm}(u_i^{n+1,*})}{2},
\end{equation}
with $\omega=2-C\Delta t$.
Finally, we recover the updated physical variables at the next time step by computing
\begin{equation*}
    u_i^{n+1}=f_{-,i}^{n+1}+f_{+,i}^{n+1}.
\end{equation*}
This scheme is first-order accurate in space and time, due to the splitting and the use of an upwind flux in the transport step.
Like in \eqref{strang_splitting}, we denote by $\mathcal{T}_1^{\text{exp,FV}}(\Delta t)$ the transport step \eqref{WB HKR-VF 2} and by $\mathcal{R}_2(\Delta t)$ the projection-source step \eqref{evolution source FV}--\eqref{projection FV}; the combination of the two is denoted by $\mathcal{S}_1^{\text{exp,FV}}(\Delta t)$.
Due to the explicit treatment of the transport step, the stability of the overall scheme is conditioned to the classical finite volume time step restriction
\begin{equation*}
    \Delta t \leq \frac{\Delta x}{\lambda}.
\end{equation*}

We now prove that the scheme described above is indeed exactly well-balanced.
The following result shows that a set of cell averages of a stationary solution is an equilibrium of the splitting operator.

\begin{theorem}\label{teorema WB FV-HKR}
    Consider $\{u_i^e\}_{i=1}^{N_x}$ a set of cell averages of a stationary solution~$u^e$ of the balance law \eqref{balance law}, which satisfies \eqref{stationary solution}. Let $\mathcal{S}_1^{\text{exp,FV}}$ be the scheme constructed above. Then
    \begin{equation*}
        \forall i \in \{1, \dots, N_x\}, \quad
        \mathcal{S}_1^{\text{exp,FV}}(\Delta t)
        (u_i^e)=u_i^e.
    \end{equation*}
\end{theorem}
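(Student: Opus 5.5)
The plan is to follow one time step of $\mathcal{S}_1^{\text{exp,FV}}(\Delta t)$, starting from the data $u_i^n = u_i^e$ with kinetic averages $f_{\pm,i}^n = m_\pm(u^e)$ averaged over cell $i$. We then check that each sub-step leaves the equilibrium invariant. Since $m_\pm$ is nonlinear, we interpret the kinetic cell values consistently with the initialisation $f_\pm^0 = m_\pm(u^0)$: we take $f^n_{\pm,i}$ to be the quadrature average $\sum_m \omega_m m_\pm(u^e(x_i^m))$. Whether one uses this or $m_\pm(u_i^e)$, the transport argument below only needs the transported kinetic averages to sum back to $u_i^e$ and to be consistent with the local equilibrium.

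\textbf{Step 1: the reconstruction.} By \cref{def:well_balanced_reconstruction_operator} and the property of \cref{alg:WB_reconstruction} recalled after it (with $Q_i$ exact for zero data), the local stationary solution found in cell $i$ is $u_i^{e,n} = u^e$ restricted to the stencil, and the fluctuations $v_j$ vanish. Hence $P_i^n(x) = u^e(x)$ on $I_i$ for every $i$. By \eqref{eq:kinetic_reconstruction} it follows that $P^n_{\pm,i}(x) = m_\pm(u^e(x)) = f^{e,n}_{\pm,i}(x)$. The key point here is that every local equilibrium coincides with the same global $u^e$, so the reconstructions of neighbouring cells agree at the interfaces.

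\textbf{Step 2: transport.} In \eqref{WB HKR-VF 2} we substitute $F^n_{+,i+\frac12} = m_+(u^e(x_{i+\frac12})) = f^{e,n}_{+,i}(x_{i+\frac12})$ and $F^n_{+,i-\frac12} = P^n_{+,i-1}(x_{i-\frac12}) = m_+(u^e(x_{i-\frac12})) = f^{e,n}_{+,i}(x_{i-\frac12})$, and proceed symmetrically for the minus component using $P^n_{-,i+1}$. The flux difference then cancels exactly with the well-balancing correction term, which gives $f^{n,*}_{\pm,i} = f^n_{\pm,i}$. Consequently $u_i^{n,*} = f^{n,*}_{+,i} + f^{n,*}_{-,i} = u_i^e$.

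\textbf{Step 3: source and projection.} In \eqref{evolution source FV} the value $u_i^{n+1,*} = u_i^{n,*}$ solves the Crank--Nicolson system, provided the source evaluated at the equilibrium term $S(u_i^{e,n},x_i)$ is read as the same point value used for $u_i^{n,*}$, namely the cell value of the stationary solution. Then the right-hand side bracket is $2S(u_i^e,x_i) - 2S(u_i^e,x_i) = 0$. Assuming uniqueness of the solution of this nonlinear system for small $\Delta t$ (implicit function theorem), we get $u^{n+1,*}_i = u_i^e$. The projection \eqref{projection FV} then gives $f^{n+1}_{\pm,i} = (1-\omega) f^{n,*}_{\pm,i} + \omega\, m_\pm(u_i^e)$. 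Summing over $\pm$ and using \eqref{eq:consistency_u} yields $u_i^{n+1} = (1-\omega)u_i^e + \omega u_i^e = u_i^e$, which is the claim.

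\textbf{Main obstacle.} The delicate point is notational consistency. We must make sure that the $S(u_i^{e,n},x_i)$ appearing in \eqref{evolution source FV} is evaluated at exactly the same value as $u_i^{n,*}$, so that the cancellation in the source step is exact. We also need the nonlinear source solve to select the trivial root. Step 2 is otherwise purely algebraic once Step 1 has identified all reconstructions with the global $u^e$. Note also that the physical variable is preserved even if the kinetic variables are not individually at equilibrium, because $1-\omega+\omega = 1$.
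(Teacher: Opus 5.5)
Your proposal is correct and follows the same route as the paper's proof. The exactly well-balanced reconstruction makes the upwind flux difference in \eqref{WB HKR-VF 2} cancel against the correction term, the trivial root $u_i^{n+1,*}=u_i^{n,*}=u_i^e$ solves \eqref{evolution source FV}, and the projection \eqref{projection FV} returns $u_i^e$. The differences are minor: the paper takes $f^n_{\pm,i}=m_{\pm}(u_i^e)$ and therefore obtains the slightly stronger kinetic invariance $f^{n+1}_{\pm,i}=f^n_{\pm,i}$, while you conclude only on $u$ via $m_{+}+m_{-}=\mathrm{id}$, which suffices for the statement. You also make explicit two assumptions the paper uses silently, namely that the local equilibria coincide with $u^e$ and that the trivial Crank--Nicolson root is the one selected.
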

\begin{proof}
    The goal of this proof is to show that the scheme $\mathcal{S}_1^{\text{exp,FV}}$ is exactly well-balanced, i.e., that if $u_i^n = u_i^e$ for all $i$, then $u_i^{n+1} = u_i^e$ for all $i$ as well. To that end, we show that the transport step \eqref{WB HKR-VF 2} and the projection-source step \eqref{evolution source FV}--\eqref{projection FV} preserve the stationary solution. Let $i \in \{1, \dots, N_x\}$ be fixed, and let $x \in I_i$. We assume that an exactly well-balanced reconstruction operator $P_i^n$ has been constructed.

    Considering \eqref{WB HKR-VF 2} and focusing on $f_{-,i}^n$, we have that
    \begin{equation*}
        F_{-,i-\frac{1}{2}}^n
        =
        P^n_{-,i}(x_{i-\frac{1}{2}})
        =
        m_-\left(P_i^n(x_{i-\frac{1}{2}})\right)
        =
        m_-\left(u^{e,n}(x_{i-\frac{1}{2}})\right),
    \end{equation*}
    where we have used the definition \eqref{eq:kinetic_reconstruction} of $P^n_{-,i}$ as well as the fact that $P_i^n$ is exactly well-balanced.
    Similar identities hold for \smash{$F_{-,i+\nicefrac{1}{2}}^n$}.
    Furthermore, recall that
    \begin{equation*}
        f_{-,i}^{e,n}(x_{i-\frac{1}{2}}) = m_-\left(u^{e,n}(x_{i-\frac{1}{2}})\right).
    \end{equation*}
    Similar identities hold for $f_{+,i}$.
    Therefore, plugging everything into \eqref{WB HKR-VF 2}, we get that
    \begin{equation}
        \label{eq:transport_step_WB_FV}
        f_{\pm,i}^{n,*}=f_{\pm,i}^n=m_{\pm}(u_i^{e,n})
        \text{, \qquad and so \qquad}
        u_i^{n,*}=u_i^{e,n}.
    \end{equation}

    Moving on to the projection-source step, plugging \eqref{eq:transport_step_WB_FV} into \eqref{evolution source FV}, we see that $u_i^{n+1,*}=u_i^{n,*}=u_i^{e,n}$ solves \eqref{evolution source FV}.
    Finally, applying \eqref{projection FV} with the values found above, we get that
    \begin{equation*}
        f_{\pm,i}^{n+1}=(1-\omega)\ m_{\pm}(u_i^{e,n})+\frac{\omega}{2}\Big(m_{\pm}(u_i^{e,n})+m_{\pm}(u_i^{e,n})\Big)=m_{\pm}(u_i^{e,n})=f_{\pm,i}^{n},
    \end{equation*}
    which implies that $u_i^{n+1}=f_{+,i}^{n}+f_{-,i}^{n}=u_i^{e,n}$, as we intended to demonstrate.
\end{proof}

Equipped with this first-order accurate explicit scheme, its extension to higher orders of accuracy is straightforward. To achieve second-order accuracy, we must ensure second-order precision in the transport step by employing, for example, a second-order TVD Runge-Kutta method coupled with a MUSCL \cite{Lee1979} spatial reconstruction operator for the space fluctuations. Additionally, this must be combined with a Strang splitting technique, as detailed in \cref{strang_splitting}.
For the third-order scheme, it is sufficient to apply a third-order spatial reconstruction operator; in this work, we use the CWENOZ3 method from~\cite{SemTraPup2021}. Interestingly, to achieve third-order accuracy in time, an explicit third-order transport stage is not strictly required. As shown by \cite{coulette2019high}, it is enough to appropriately combine second-order splittings. For this purpose, we employ the Suzuki splitting method, which is in fact fourth-order accurate in time. It is defined by
\begin{equation}\label{suzuki splitting}
    \mathcal{S}_4(\Delta t)=\mathcal{S}_2(\gamma_0\Delta t)\circ \mathcal{S}_2(\gamma_1\Delta t)\circ \mathcal{S}_2(\gamma_2\Delta t)\circ \mathcal{S}_2(\gamma_3\Delta t)\circ \mathcal{S}_2(\gamma_4\Delta t)
\end{equation}
with $\gamma_0=\gamma_1=\gamma_3=\gamma_4=\frac{1}{4-4^{1/3}}$ and $\gamma_2=\frac{4^{1/3}}{4-4^{1/3}}$, and where the second-order splitting is
\begin{equation}\label{splitting second-order in suzuki}
    \mathcal{S}_2(\Delta t)
    =
    \mathcal{T}\left(\frac{\Delta t}{4}\right)\circ
    \mathcal{R}\left(\frac{\Delta t}{2}\right)\circ
    \mathcal{T}\left(\frac{\Delta t}{2}\right)\circ
    \mathcal{R}\left(\frac{\Delta t}{2}\right)\circ
    \mathcal{T}\left(\frac{\Delta t}{4}\right)
\end{equation}
Obviously, the well-balanced property of the high-order methods follows immediately from the proof of \Cref{teorema WB FV-HKR}, since each intermediate transport stage involving the second-order TVD Runge-Kutta scheme leaves the stationary solution of the original system invariant. Meanwhile, the source evolution and projection steps remain identical to those previously employed.

\subsubsection{Implicit schemes}
\label{sec:implicit_WB_FV}
% ~ \\
% \medskip
% \newline

To derive an implicit finite volume scheme, the transport stage must be modified by adopting an implicit scheme, alongside an adaptation of the well-balanced reconstruction operator following the approach proposed in~\cite{gomez2021implicit}.
As shown in~\cite{gomez2021implicit}, both first and second-order spatial accuracy can be attained by employing a piecewise constant reconstruction operator for the temporal fluctuations.
% \medskip
% \newline

The main difference with respect to the explicit case is that we now perform the reconstructions directly on the kinetic variables rather than the physical ones. The primary reason for this approach is that it allows us to solve linear systems during the finite-volume transport stage. In contrast, reconstructing the physical variables would introduce nonlinearities through the kinetic equilibrium variables, which would mean having to solve nonlinear systems (using e.g. Newton's method).
Then, the reconstruction operator now reads:
\begin{equation}\label{rec op kinetic variables implicit}
    \begin{aligned}
        P_{\pm,i}^t(x)
         & =f_{\pm,i}^{e,n}(x)+Q_{\pm,i}(x;\{v_{\pm,j}\}_{j\in\mathcal{S}_i})+\tilde{Q}_{\pm,i}(x;\{f_{\pm,j}^\text{fluc}(t)\}_{j\in\mathcal{\tilde{S}}_i}) \\
         & =P_{\pm,i}^n(x)+\tilde{Q}_{\pm,i}(x;\{f_{\pm,j}^\text{fluc}(t)\}_{j\in\mathcal{\tilde{S}}_i}),
    \end{aligned}
\end{equation}
where \smash{$f_{\pm,i}(t)=f_{\pm,i}^n+f_{\pm,i}^\text{fluc}(t)$}, with \smash{$f_{\pm,i}^\text{fluc}(t^n)=0$}.
In our particular case, it is sufficient to consider a piecewise constant $\tilde{Q}_i$.
Hence, the well-balanced reconstruction operator at time $t^{n+1}$ for the transport, denoted by $P_{\pm,i}^{n,*}$, reads
\begin{equation*}
    P_{\pm,i}^{n,*}(x)=P_{\pm,i}^{n}(x)+f_{\pm,i}^{n,*}-f_{\pm,i}^n.
\end{equation*}
For the transport stage of the first-order implicit scheme, we employ the backward Euler method and upwind numerical flux
\begin{equation}\label{FV-HKR transport implicit o1}
    % f_{k,i}^{n,*}=f_{k,i}^n-\frac{\Delta t}{\Delta x}v_k\Big(F_{k,i+\frac{1}{2}}^{n+1}-F_{k,i-\frac{1}{2}}^{n+1}\Big)+\frac{\Delta t}{\Delta x}v_k\Big(f_{k,i}^{e,n}(x_{i+\frac{1}{2}})-f_{k,i}^{e,n}(x_{i-\frac{1}{2}})\Big),
    f_{\pm,i}^{n,*}
    =
    f_{\pm,i}^n
    \mp
    \lambda \frac{\Delta t}{\Delta x} \Big(F_{\pm,i+\frac{1}{2}}^{n,*}-F_{\pm,i-\frac{1}{2}}^{n,*}\Big)
    \pm
    \lambda \frac{\Delta t}{\Delta x} \Big(f_{\pm,i}^{e,n}(x_{i+\frac{1}{2}})-f_{\pm,i}^{e,n}(x_{i-\frac{1}{2}})\Big),
\end{equation}
whereas for the second-order implicit scheme, the trapezoidal rule is used in conjunction with a second-order Strang splitting for the combination of the transport and the source-relaxation steps.
In any case, the source evolution and projection steps remain identical to those described previously.
However, thanks to the implicit treatment of the transport equation, the full scheme no longer has a stability condition on the time step, giving the opportunity to take larger time steps if desired.

We denote by $\mathcal{S}_1^\text{imp,FV}(\Delta t)=\mathcal{R}_2(\Delta t) \circ \mathcal{T}_1^\text{imp,FV}(\Delta t)$ the first-order implicit scheme; the following result shows that it is exactly well-balanced.

\begin{theorem}\label{teorema WB FV-HKR imp}
    % set of $\{u_i^e\}_{i=1}^{N_x}$ averages of a stationary solution $u^e$ of the balance law \eqref{balance law}, which satisfies \eqref{stationary solution}.
    % Let $P_{\pm,i}^n$ be an exactly well-balanced reconstruction operator for $m_\pm(u^e)$ in the $i$\textsuperscript{th} cell. Then
    % \begin{equation*}
    %     \forall i \in \{1, \dots, N_x\}, \quad
    %     \mathcal{S}_1^{\text{imp,FV}}(\Delta t)
    %     (u_i^e)=u_i^e.
    % \end{equation*}
    Consider $\{u_i^e\}_{i=1}^{N_x}$ a set of cell averages of a stationary solution~$u^e$ of the balance law \eqref{balance law}, which satisfies \eqref{stationary solution}. Let $\mathcal{S}_1^{\text{imp,FV}}$ be the scheme constructed above. Then
    \begin{equation*}
        \forall i \in \{1, \dots, N_x\}, \quad
        \mathcal{S}_1^{\text{imp,FV}}(\Delta t)
        (u_i^e)=u_i^e.
    \end{equation*}
\end{theorem}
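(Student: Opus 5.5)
Following the proof of \Cref{teorema WB FV-HKR}, we treat the implicit transport step $\mathcal{T}_1^{\text{imp,FV}}(\Delta t)$ and the projection-source step $\mathcal{R}_2(\Delta t)$ separately. Only the transport step changes, so the argument reduces to showing that $f_{\pm,i}^{n,*}=f_{\pm,i}^n=m_\pm(u_i^{e,n})$ solves the implicit system \eqref{FV-HKR transport implicit o1}. The second half of the proof of \Cref{teorema WB FV-HKR} then applies verbatim.

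Assume $u_i^n=u_i^e$ for all $i$, with kinetic data $f_{\pm,i}^n$ the projected equilibrium averages. The reconstruction \eqref{rec op kinetic variables implicit} is now built on the kinetic variables. First we check that $P^n_{\pm,i}=f^{e,n}_{\pm,i}+Q_{\pm,i}(\cdot;\{v_{\pm,j}\})$ is exactly well-balanced. Since $f^{e,n}_{\pm,i}=m_\pm(u_i^{e,n})$ is the local equilibrium found in \cref{alg:WB_reconstruction} and coincides with $m_\pm(u^e)$ on the stencil, the fluctuations $v_{\pm,j}$ vanish. Because $Q_{\pm,i}$ is exact for the zero function, this gives $P^n_{\pm,i}(x)=m_\pm(u^e(x))=f^{e,n}_{\pm,i}(x)$ on $I_i$. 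Uniqueness of the local stationary solution with the prescribed average is assumed, as in \cref{alg:WB_reconstruction}.

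Next we use the ansatz $f_{\pm,i}^{n,*}=f_{\pm,i}^n$ for every $i$. The piecewise constant correction $\tilde{Q}_{\pm,i}$ is then zero, so $P^{n,*}_{\pm,i}=P^n_{\pm,i}$. The upwind fluxes become $F^{n,*}_{+,i+\frac12}=P^n_{+,i}(x_{i+\frac12})=f^{e,n}_{+,i}(x_{i+\frac12})$ and $F^{n,*}_{+,i-\frac12}=P^n_{+,i-1}(x_{i-\frac12})$. For this last term, well-balancedness of $P^n_{+,i-1}$ gives $m_+(u^e(x_{i-\frac12}))$, which equals $f^{e,n}_{+,i}(x_{i-\frac12})$ because both local equilibria coincide with the global $u^e$. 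The $f_-$ fluxes are treated symmetrically. Hence the flux difference cancels exactly with the equilibrium correction term in \eqref{FV-HKR transport implicit o1}, and the ansatz satisfies the implicit equations.

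The main obstacle is that the ansatz being \emph{a} solution must coincide with \emph{the} solution the scheme computes. Since $\tilde Q$ is piecewise constant and the equilibrium parts are frozen at time $t^n$, \eqref{FV-HKR transport implicit o1} is a linear system in the unknowns $f^{\text{fluc}}_{\pm,i}$. Writing $d_i=f^{n,*}_{+,i}-f^n_{+,i}$, it reads $d_i+\lambda\frac{\Delta t}{\Delta x}(d_i-d_{i-1})=r_i$, where $r_i$ is exactly the residual that vanishes at equilibrium. The matrix is strictly diagonally dominant, an M-matrix with diagonal $1+\lambda\Delta t/\Delta x$ and off-diagonal $-\lambda\Delta t/\Delta x$, under periodic or free-flow boundary conditions. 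It is therefore invertible, so $d=0$ is the unique solution, and likewise for $f_-$.

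Consequently $u_i^{n,*}=u_i^e$. Then $u_i^{n+1,*}=u_i^e$ solves \eqref{evolution source FV}, since the source terms cancel against $S(u_i^{e,n},x_i)$. Finally, \eqref{projection FV} returns $f^{n+1}_{\pm,i}=m_\pm(u_i^e)$, so $u_i^{n+1}=u_i^e$.
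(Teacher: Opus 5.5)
Your proof takes the same route as the paper. You check that $f^{n,*}_{\pm,i}=f^{n}_{\pm,i}$ satisfies \eqref{FV-HKR transport implicit o1}, because the upwind fluxes of the exactly well-balanced kinetic reconstruction cancel the equilibrium correction term, and you then reuse the projection--source part of the proof of \cref{teorema WB FV-HKR}. Your diagonal-dominance argument, showing that this is the \emph{unique} solution of the linear transport system and hence what the scheme actually returns, is a worthwhile addition: the paper only observes that the equilibrium is \emph{a} solution.
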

\begin{proof}
    To prove this result, it suffices to observe that $f_{\pm,i}^{n,*} = f_{\pm,i}^{n}$ is solution of the system \eqref{FV-HKR transport implicit o1}, arguing similar arguments as in the proof of \cref{teorema WB FV-HKR}. Then, the projection-source step being the same as for the explicit case, the remainder of the proof is found in \cref{teorema WB FV-HKR}.
    % It is enough to prove that the vector $(u_i^e)_{i=1}^{N_x}$ is solution of the system \eqref{FV-HKR transport implicit o1}, because the projection-source step is the same as for the explicit case, treated in \cref{teorema WB FV-HKR}.
    % Evaluating in \eqref{FV-HKR transport implicit o1} $f_{\pm,i}^{n,*}=f_{\pm,i}^{n}=m_{\pm}(u_i^e)$. Then, for the $f_{-}$ equations (equivalently for $f_{+}$ case).
    % \[
    % \begin{aligned}
    %  & F_{-,i+\frac{1}{2}}^{n,*}-F_{-,i-\frac{1}{2}}^{n,*}-f_{-,i}^{e,n}(x_{i+\frac{1}{2}})+f_{-,i}^{e,n}(x_{i-\frac{1}{2}})= \\&P^{n,*}_{-,i+1}(x_{i+\frac{1}{2}})-P^{n,*}_{-,i}(x_{i-\frac{1}{2}})-f_{-,i}^{e,n}(x_{i+\frac{1}{2}})+f_{-,i}^{e,n}(x_{i-\frac{1}{2}})=\\&P^{n}_{-,i+1}(x_{i+\frac{1}{2}})-P^{n}_{-,i}(x_{i-\frac{1}{2}})-f_{-,i}^{e,n}(x_{i+\frac{1}{2}})+f_{-,i}^{e,n}(x_{i-\frac{1}{2}})=\\&m_{-}(u^{e,n}(x_{i+\frac{1}{2}}))-m_{-}(u^{e,n}(x_{i-\frac{1}{2}}))-m_{-}(u^{e,n}(x_{i+\frac{1}{2}}))+m_{-}(u^{e,n}(x_{i-\frac{1}{2}}))=0.
    % \end{aligned}
    % \]
\end{proof}

The proof for the second-order implicit scheme is analogous, applying the trapezoidal rule within the finite volume framework.

\subsection{Semi-Lagrangian Exactly Well-Balanced Kinetic Relaxation}
\label{sec:HKR-SL-WB}

We now extend this strategy by applying a Semi-Lagrangian (SL) scheme to the transport stage, instead of the finite volume framework. The SL approach evolves the solution using the method of characteristics, which are exactly known in the relaxed kinetic system since it is linear; specifically, they are straight, parallel lines for $f_+$ and $f_-$, respectively. A major advantage of this method is that it allows us to bypass the time-step restriction inherent to the explicit case. Because the characteristics traced backward from the target point can land at any arbitrary spatial position at the previous time level, a variable reconstruction procedure is necessary. For this purpose, we adopt the strategy of reconstructing the physical variables. Furthermore, we opt for a globally continuous reconstruction across the entire computational domain.

Let us begin by detailing the reconstruction procedure. Let $P_i^n$ be a well-balanced reconstruction operator for the stationary solution of the original hyperbolic system~\eqref{stationary solution}, as defined in \cref{def:well_balanced_reconstruction_operator}. We construct a globally continuous reconstruction operator, denoted by $P^n$, by defining it on the dual mesh as a convex linear combination of the adjacent local reconstruction operators. That is, for all $j \in \{0, \dots, N_x-1\}$ and $x \in (x_{j},x_{j+1})$, we define
\begin{equation*}
    \alpha_j(x)=\frac{x-x_{j}}{\Delta x},
\end{equation*}
and we set
\begin{equation}\label{globally continuous rec op}
    \restr{P^n}{(x_{j},x_{j+1})}(x)=\big(1-\alpha_j(x)\big)\ P_j^n(x)+\alpha_j(x) \ P_{j+1}^n(x).
\end{equation}
Depending on the prescribed boundary conditions, we must adapt the reconstruction procedure for any spatial points that fall outside the primal mesh.
Once this reconstruction of the physical variables is known at time $t^n$, to solve the transport stage~\eqref{eq: transport step fluctuations}, we evolve the kinetic variables through the characteristics.

As a first step, we define the kinetic variables between times $t^n$ and $t^{n+1}$ by evaluating the kinetic equilibrium functions along the characteristics, as follows:
\begin{align}
    \notag
    \forall x \in \mathbb{R}, \, \forall t\in[t^n,t^{n+1}], \quad
     & f_{\pm}(x,t)=m_{\pm}\Big(P^n(x\mp\lambda(t-t^n))\Big),
    \\
    \forall x \in \mathbb{R}, \, \forall t\in[t^n,t^{n+1}], \quad
     & f_{\pm,i}^{e,n}(x,t)=m_{\pm}\Big(u_i^{e,n}(x\mp\lambda(t-t^n))\Big). \label{SL-steady}
\end{align}
In \eqref{SL-steady}, $i$ corresponds to cell $I_i$, such that $x \in I_i$.

Let us recall that the transport equation \eqref{eq: transport step fluctuations} reads, with $\delta f_{\pm}=f_{\pm}-f_{\pm}^{e,n}$,
\begin{equation*}
    \partial_t (\delta f_{\pm}) \pm \lambda \partial_x (\delta f_{\pm})=0.
\end{equation*}
Solving the above equation using the SL framework, we find
\begin{equation*}
    \forall x \in \mathbb{R}, \, \forall t\in[t^n,t^{n+1}], \quad
    \delta f_{\pm}(x,t)=\delta f_{\pm}\big(x\mp\lambda(t-t^n),t^n\big).
\end{equation*}
For simplicity, for low order methods (1 and 2), we evolve the center $x_i$ of the cells in each time-step, as we could identify the point value of the center of the cells as the averages. Denoting $f_{\pm,i}=f_{\pm}(x_i)$, the above equation evaluated at $t=t^{n+1}$ and $x=x_i$ reads
\begin{align}
    \notag
    f_{\pm,i}^{n,*}
     & =
    f_\pm(x_i, t^n + \Delta t)
    =
    f_\pm\big(x_i \mp \lambda \Delta t, t^n\big)
    - f_{\pm, i}^{e, n}\big(x_i \mp \lambda \Delta t\big)
    + f_{\pm, i}^{e, n}\big(x_i\big) \\
     & =
    \label{eq:SL-transport-step}
    m_{\pm}\Big(P^n(x_i\mp\lambda\Delta t)\Big)
    - m_{\pm}\Big(u_i^{e,n}(x_i\mp\lambda\Delta t)\Big)
    + m_{\pm}\Big(u_i^{e,n}(x_i)\Big).
\end{align}
It is important to emphasize that, during the transport step of the kinetic variable fluctuations, we evaluate the projection of the stationary extension relative to the $i$\textsuperscript{th} cell, rather than the projection of the stationary state associated with the cell where the foot of the characteristic lies.

The evolution of the physical variables due to the source term and the subsequent projection step is performed exactly as in the finite volume case presented in \cref{sec:HKR-FV-WB}. Likewise, achieving higher-order schemes requires the use of high-order spatial reconstruction operators coupled with appropriate time-splitting techniques. Also, in third-order or higher, we have to evolve the values at quadrature nodes properly to compute the averages to solve the projection step.

We denote by $\mathcal{S}_1^\text{SL}(\Delta t)=\mathcal{R}(\Delta t) \circ \mathcal{T}_1^\text{SL}(\Delta t)$ the first-order SL scheme; the following result shows that it is exactly well-balanced.

\begin{theorem}\label{teorema WB SL-HKR}
    % Consider a set of $\{u_i^e\}_{i=1}^{N_x}$ averages of a stationary solution $u^e$ of the balance law \eqref{balance law}, which satisfies \eqref{stationary solution}. Let $P_i^n$ be an exactly well-balanced reconstruction operator for $u^e$ in the $i$\textsuperscript{th} cell, and $P^n$ the globally continuous reconstruction operator defined in \eqref{globally continuous rec op}. Then
    % \begin{equation*}
    %     \forall i \in \{1, \dots, N_x\}, \quad
    %     \mathcal{S}_1^{\text{SL}}(\Delta t)
    %     (u_i^e)=u_i^e.
    % \end{equation*}
    Consider $\{u_i^e\}_{i=1}^{N_x}$ a set of cell averages of a stationary solution~$u^e$ of the balance law \eqref{balance law}, which satisfies \eqref{stationary solution}. Let $\mathcal{S}_1^{\text{SL}}$ be the scheme constructed above. Then
    \begin{equation*}
        \forall i \in \{1, \dots, N_x\}, \quad
        \mathcal{S}_1^{\text{SL}}(\Delta t)
        (u_i^e)=u_i^e.
    \end{equation*}
\end{theorem}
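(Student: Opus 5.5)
The argument follows the same two-stage structure as the proof of \cref{teorema WB FV-HKR}. First we show that the semi-Lagrangian transport step \eqref{eq:SL-transport-step} leaves the kinetic variables unchanged, so $f_{\pm,i}^{n,*}=m_\pm(u^e(x_i))$ and hence $u_i^{n,*}=u^e(x_i)$. Then we reuse verbatim the projection-source argument of \cref{teorema WB FV-HKR}. Throughout, we identify point values at cell centers with the cell averages, as the first-order SL scheme does. We therefore assume that $u_i^n$ are the data of $u^e$, so that for every $i$ the local stationary solution from \cref{alg:WB_reconstruction} satisfies $u_i^{e,n}\equiv u^e$ on the stencil $\mathcal{S}_i$.

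The key step is to prove that the globally continuous reconstruction \eqref{globally continuous rec op} is itself exactly well-balanced, namely $P^n(x)=u^e(x)$ for every $x$. For each $j$ we have $P_j^n=u^e$ on the relevant region, by \cref{def:well_balanced_reconstruction_operator}, because the fluctuations $v_j$ vanish and $Q_j$ is exact for zero. This is exactly where care is needed. The blend on $(x_j,x_{j+1})$ uses $P_j^n$ and $P_{j+1}^n$ outside their home cells (each is evaluated on half of a neighbouring cell). So we must use that $P_j^n(x)=u_j^{e,n}(x)+Q_j(x;\{0\})=u^e(x)$ holds for all $x$ in the stencil's domain of definition, not only in $I_j$. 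Since the local stationary solution is defined on the whole stencil $\mathcal{S}_j$ and $Q_j$ vanishes identically for zero data, this holds. With this, $(1-\alpha_j)u^e+\alpha_j u^e=u^e$. We also assume the boundary extension (ghost cells, with the stationary solution extended) is compatible, so that $P^n=u^e$ at feet of characteristics that fall outside the mesh.

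Plugging this into \eqref{eq:SL-transport-step} gives
\[
f_{\pm,i}^{n,*}=m_\pm\big(u^e(x_i\mp\lambda\Delta t)\big)-m_\pm\big(u_i^{e,n}(x_i\mp\lambda\Delta t)\big)+m_\pm\big(u_i^{e,n}(x_i)\big).
\]
Since $u_i^{e,n}=u^e$ on the stencil, the first two terms cancel, and the result is $m_\pm(u^e(x_i))=f_{\pm,i}^n$. The cancellation is also insensitive to the foot leaving the stencil, provided $u_i^{e,n}$ is understood as the stationary extension and coincides with $u^e$ there. This is where I expect the main subtlety: for large $\Delta t$ the foot $x_i\mp\lambda\Delta t$ may lie far from $I_i$. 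The remark after \eqref{eq:SL-transport-step}, that the stationary extension of cell $i$ is used, is exactly what guarantees the cancellation, since all cells share the same global steady state $u^e$.

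Summing the two kinetic variables gives $u_i^{n,*}=u^e(x_i)=u_i^{e,n}(x_i)$. Then $u_i^{n+1,*}=u_i^{n,*}$ solves \eqref{evolution source FV}, because the source contributions cancel. Finally, \eqref{projection FV} returns $f_{\pm,i}^{n+1}=(1-\omega)m_\pm(u_i^e)+\omega m_\pm(u_i^e)=m_\pm(u_i^e)$, and so $u_i^{n+1}=u_i^e$, exactly as in \cref{teorema WB FV-HKR}.
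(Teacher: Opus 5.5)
Your proof is correct and follows the paper's argument. You reduce to the transport step, use the convex-combination definition \eqref{globally continuous rec op} to show that $P^n$ equals $u^e = u_i^{e,n}$ at the foot of the characteristic (so the first two terms of \eqref{eq:SL-transport-step} cancel), and reuse the projection-source step of \cref{teorema WB FV-HKR}; your remarks that $P_j^n$ must equal $u^e$ beyond its home cell $I_j$ and that the boundary extension must be compatible make explicit assumptions the paper's chain of equalities uses tacitly.
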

\begin{proof}
    Like in the proof of \cref{teorema WB FV-HKR imp}, it is sufficient to take care of the transport step, as the relaxation-source step does not change with respect to the case of \cref{teorema WB FV-HKR}.
    To that end, we note that, starting from equilibrium data, \eqref{eq:SL-transport-step} rewrites as
    \begin{equation*}
        f_{\pm,i}^{n,*} = f_{\pm,i}^{n}
        + m_{\pm}\Big(P^n(x_i\mp\lambda\Delta t)\Big)
        - m_{\pm}\Big(u_i^{e,n}(x_i\mp\lambda\Delta t)\Big).
    \end{equation*}
    Therefore, to prove that $f_{\pm,i}^{n,*} = f_{\pm,i}^{n}$, it is enough to show that
    \begin{equation*}
        m_{\pm}\Big(P^n(x_i\mp\lambda\Delta t)\Big) = m_{\pm}\Big(u_i^{e,n}(x_i\mp\lambda\Delta t)\Big).
    \end{equation*}
    We prove this identity for the $f_{-}$ case ($f_{+}$ being analogous). Given $i \in \{1, \dots, N_x\}$, consider $j \in \{0, \dots, N_x\}$ as the cell in the dual mesh such that $x_i+\lambda\Delta t \in[x_j,x_{j+1}]$. Then, using the definition \eqref{globally continuous rec op} of $P^n$ and recalling that $P_i^n$ is well-balanced, we obtain the following chain of equalities:
    \[
        \begin{aligned}
            P^n(x_i+\lambda\Delta t)
             & =
            \Big(1-\alpha_j(x_i+\lambda\Delta t)\Big)\ P_j^n(x_i+\lambda \Delta t)
            +
            \alpha_j(x_i+\lambda\Delta t)\ P_{j+1}^n(x_i+\lambda \Delta t) \\
             & =
            \Big(1-\alpha_j(x_i+\lambda\Delta t)\Big)\ u_j^e(x_i+\lambda \Delta t)
            +
            \alpha_j(x_i+\lambda\Delta t)\ u_{j+1}^e(x_i+\lambda \Delta t) \\
             & =
            \Big(1-\alpha_j(x_i+\lambda\Delta t)\Big)\ u^e(x_i+\lambda \Delta t)
            +
            \alpha_j(x_i+\lambda\Delta t)\ u^e(x_i+\lambda \Delta t)       \\
             & =
            u^e(x_i+\lambda \Delta t)
            =
            u_i^{e,n}(x_i+\lambda \Delta t),
        \end{aligned}
    \]
    which proves that the transport step, and thus the scheme, is exactly well-balanced.
\end{proof}

\section{Numerical Experiments}
\label{sec:numerical experiments}

In this section, we present a series of numerical tests to validate the robustness, accuracy, and well-balanced properties of the proposed schemes. We consider three different physical models: the Burgers' equation with a source term, the shallow water equations with bottom topography, and the Euler equations with a gravitational potential. The different schemes that will be used are described and labelled in \cref{tab:schemes_summary}.

\begin{table}[!ht]
    \centering
    \begin{tabular}{ll}
        \toprule
        \textbf{Label} & \textbf{Scheme description}                           \\
        \midrule
        FV-HKR-O1-Exp  & WB, 1st-order explicit Finite Volume scheme with HKR. \\
        FV-HKR-O2-Exp  & WB, 2nd-order explicit Finite Volume scheme with HKR. \\
        FV-HKR-O3-Exp  & WB, 3rd-order explicit Finite Volume scheme with HKR. \\
        FV-HKR-O1-Imp  & WB, 1st-order implicit Finite Volume scheme with HKR. \\
        FV-HKR-O2-Imp  & WB, 2nd-order implicit Finite Volume scheme with HKR. \\
        SL-HKR-O1      & WB, 1st-order Semi-Lagrangian scheme with HKR.        \\
        \bottomrule
        \multicolumn{2}{l}{\footnotesize \textit{*WB: Well-Balanced; HKR: Hyperbolic Kinetic Relaxation.}}
    \end{tabular}
    \caption{Summary of the proposed numerical schemes and their characteristics.}
    \label{tab:schemes_summary}
\end{table}

The main objective of these experiments is threefold: first, to analyze the ability of the schemes to accurately capture complex physical dynamics, such as the formation of shock and rarefaction waves; second, to empirically verify the fulfillment of the exactly well-balanced property against various steady states; and third, to perform a convergence study confirming the theoretical order of accuracy of each method. For simulations lacking a known analytical solution, reference solutions have been computed using a traditional first-order finite volume local Lax-Friedichs well-balanced scheme on extremely fine meshes. Furthermore, since most of the test cases require free-flow boundary conditions that allow waves to leave the computational domain without generating spurious reflections, we have implemented the sponge layer technique at the open boundaries.

Below, we detail the three physical models of hyperbolic balance laws that will be used throughout the numerical experiments.
They are all written under the form~\eqref{balance law}.

\medskip
\noindent\textbf{Burgers' Equation with a Quadratic Source Term.}
The first model under consideration is the classical scalar Burgers' equation, augmented with a nonlinear source term. It is a benchmark for studying the formation of discontinuities, and validating numerical convergence rates in smooth regimes. The model is given by:
\begin{equation*}
    \partial_t u + \partial_x \left(\frac{u^2}{2}\right) = \alpha u^2,
    \label{eq:model_burgers}
\end{equation*}
where $u(x,t)$ represents the conserved variable (typically interpreted as a velocity profile) and $\alpha \in \mathbb{R}$ is a constant parameter that modulates the intensity of the source term.
The stationary solutions that our schemes are designed to preserve for this equation correspond to exponential branches of the form $u^e(x) = C e^{\alpha x}$, with $C \in \mathbb{R}$.

\medskip
\noindent\textbf{Shallow Water Equations (SWE) with Bathymetry.}
The second model corresponds to the one-dimensional SWE with variable bottom topography. The system can be written in vectorial form by taking $u = (h, q)^T$, resulting in:
\begin{equation*}
    \begin{dcases}
        \partial_t h + \partial_x q = 0, \\
        \partial_t q + \partial_x \left( \frac{q^2}{h} + \frac{1}{2}gh^2 \right) = g h \partial_x H,
    \end{dcases}
    \label{eq:model_swe}
\end{equation*}
where the physical variables involved are the water column height $h(x,t)$ and the discharge $q(x,t) = h(x,t)v(x,t)$, with $v(x,t)$ being the depth-averaged fluid velocity. The parameter $g$ denotes the acceleration due to gravity, and the known smooth function $H(x)$ represents the bathymetry, or bottom topography, of the channel.
For this system, the preserved steady states are characterized by a constant discharge, $q^e(x,t) = q_0$. Consequently, the water height $h(x)$ must satisfy the conservation of the Bernoulli invariant, given by
\begin{equation*}
    \frac{q_0^2}{2g(h^e)^2} + h^e - H = E_0,
\end{equation*}
where $E_0$ is the constant specific energy. Multiplying by $h_e^2$, this yields the cubic polynomial equation:
\begin{equation*}
    (h^e)^3 + (-H(x) - E_0)(h^e)^2 + \frac{q_0^2}{2g} = 0.
\end{equation*}
% Depending on the local Froude number, the roots of this polynomial dictate whether the moving-water steady state represent subcritical or supercritical flow regimes.
In the simplest scenario where the fluid is at rest ($q_0 = 0$), the polynomial trivially reduces to $h^e(x) - H(x) = E_0$, naturally recovering the well known lake at rest equilibrium where the free surface elevation $\eta = h - H$ remains strictly constant across the domain.
Otherwise, physical considerations (such as $h$ remaining positive) drive the choice of the root of this cubic polynomial.

\medskip
\noindent\textbf{Euler Equations with a Gravitational Potential.}
The third and final model addresses the dynamics of compressible gases under the influence of an external gravitational field. Taking the vector of conserved variables $u = (\rho, q, E)^T$, the system is expressed as:
\begin{equation*}
    \begin{dcases}
        \partial_t \rho + \partial_x q = 0,                                               \\
        \partial_t q + \partial_x \left(\frac{q^2}{\rho} + p\right) = -\rho \partial_x H, \\
        \partial_t E + \partial_x \left(\frac{q}{\rho}(E + p)\right) = -q \partial_x H.
    \end{dcases}
    \label{eq:model_euler}
\end{equation*}
Here, $\rho(x,t)$ is the gas density, $q(x,t) = \rho(x,t) v(x,t)$ represents the momentum density (where $v(x,t)$ is the fluid velocity), $p(x,t)$ is the pressure, and $E(x,t)$ is the total energy per unit volume. The pressure is related to the other variables through the equation of state for an ideal gas, i.e.
\begin{equation*}
    p=(\gamma -1)\left(E - \frac{q^2}{2\rho} \right),
\end{equation*}
where $\gamma$ is the adiabatic index (the ratio of specific heats). The function $H(x)$ represents the given and smooth external gravitational potential.
In this context, multiple families of hydrostatic equilibria (with $q = 0$) arise.
We elect to make our numerical schemes preserve a specific two-parameter family of hydrostatic equilibria. These steady states are characterized by a fluid at rest ($q^e = 0$) and a balance between the internal pressure gradient and the external gravitational force, i.e., the relation $\partial_x p = -\rho \partial_x H$. Choosing a specific form for the density and integrating this momentum balance yields the two-parameter family of stationary solutions given by:
\begin{equation*}
    \rho^e(x) = C_1 e^{-H(x)}, \quad
    q^e(x)    = 0, \quad
    p^e(x)    = \rho^e(x) + C_2, \quad
    E^e(x)    = \frac{p^e(x)}{\gamma - 1},
\end{equation*}
where $C_1 > 0$ and $C_2 \in \mathbb{R}$ are the two independent constants that parameterize this family of equilibria.
We could have chosen to preserve a different family of hydrostatic equilibria, or even non-static isentropic equilibria like in~\cite{BerMicTho2026,MicTho2025}, but we opted for this one as it is the most classical and widely used in the literature, and it allows us to easily generate a wide variety of test cases by simply tuning the two parameters.

\medskip\subsection{Test 1: Transient Dynamics and Shock Capturing in Burgers' Equation} \label{subsec:test1_burgers}
This test evaluates the ability of the explicit FV kinetic schemes to capture transient shock waves and verifies the reduction of numerical diffusion at higher orders. The computational domain is $[a, b] = [-7.5, 7.5]$, simulated up to $T = 2.5$. We set $\text{CFL} = 0.9$ and $\alpha = 0.15$. Free-flow boundaries are used. The initial condition consists of two Gaussian pulses:
\begin{equation*}
    u(x,0) = 1.2 \exp\left(-(x + 3)^2\right) - 1.2 \exp\left(-(x - 3)^2\right).
\end{equation*}

\begin{SCfigure}[0.8][!ht]
    \centering
    \includegraphics[width=0.6\textwidth]{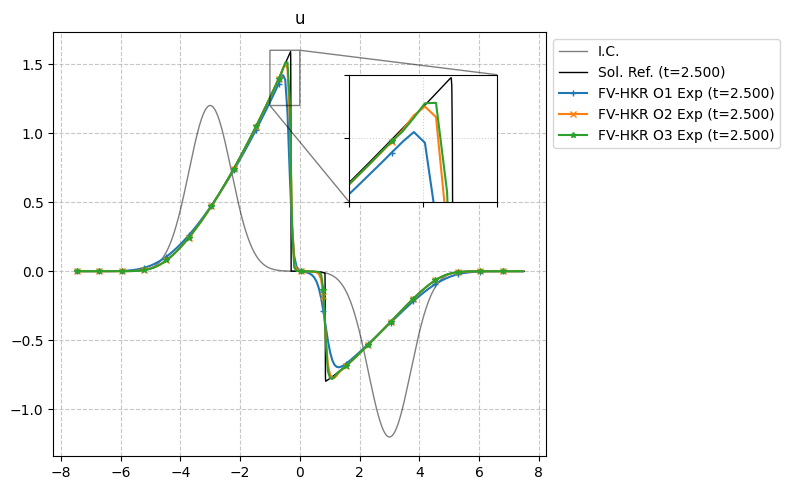}
    \caption{Numerical solutions for Test 1 (Burgers' equation) at $T=2.5$ using explicit FV-HKR schemes of orders 1, 2, and 3 with $N_x = 200$, $\Delta x=0.075$ and $\text{CFL}=0.9$.}
    \label{fig:test1_burgers}
\end{SCfigure}

The numerical results at $T = 2.5$ are shown in \cref{fig:test1_burgers}. The nonlinear flux steepens the initial pulses into sharp discontinuities moving towards one another. The first-order scheme exhibits significant numerical diffusion, smearing the shock fronts and underestimating peak amplitudes. The second-order and third-order scheme reduce this smearing, and achieve good agreement with the fine-mesh reference solution, without spurious oscillations.

\medskip
\subsection{Test 2: Blow-up, Implicit and Semi-Lagrangian Schemes for Burgers' Equation} \label{subsec:test2_burgers}
This test evaluates the performance of the implicit finite volume and Semi-Lagrangian kinetic schemes when handling shock formation and solutions approaching a blow-up regime. Driven by the quadratic source term, the initial profile is strongly amplified, which pushes the solution toward a finite-time blow-up. The computational domain is $[a, b] = [0, 5]$, simulated up to $T = 1.9$, just before the critical blow-up time. We set $\text{CFL} = 1$ and $\alpha = 0.5$. The initial condition consists of a clipped inverted parabola:
\begin{equation*}
    u(x,0) = \max\big(0, 1 - (x - 2)^2\big).
\end{equation*}

\begin{SCfigure}[0.8][!ht]
    \centering
    \includegraphics[width=0.6\textwidth]{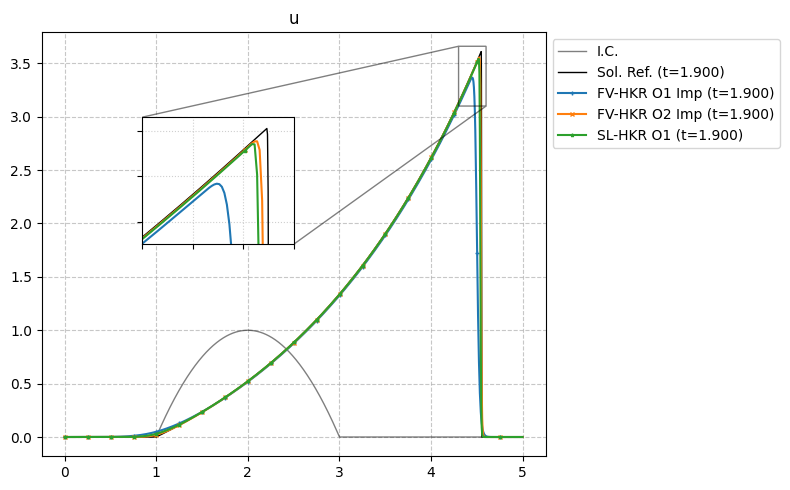}
    \caption{Numerical solutions for Test 2 (Burgers' equation) at $T=1.9$ using implicit FV-HKR (orders 1 and 2) and SL-HKR (order 1) schemes with $N_x = 1000$, $\Delta x=0.005$, and $\text{CFL}=1.0$.}
    \label{fig:test2_burgers}
\end{SCfigure}

The numerical results at $T = 1.9$ are shown in \cref{fig:test2_burgers}. The initial parabolic profile is advected and amplified by the source term, developing a shock wave on its right side as it approaches the blow-up. The first-order implicit scheme introduces the most numerical diffusion, smearing the shock front and underestimating the peak. The first-order Semi-Lagrangian scheme performs visibly better, reducing the diffusion compared to its finite volume counterpart. Finally, the second-order implicit scheme provides the sharpest profile.

\medskip
\subsection{Test 3: Impact of Large CFL Numbers on the First Order Implicit Finite Volume Scheme} \label{subsec:test3_burgers}
This test investigates the effect of using large time steps in the first-order implicit Finite Volume scheme. The computational domain is $[a, b] = [-1, 4]$, simulated up to $T = 1.5$. The spatial domain is discretized with $N_x = 4000$ cells. We test a range of CFL numbers from $1$ to $10$ and set the source term parameter to $\alpha = -0.5$. The initial condition is a rectangular pulse:
\begin{equation*}
    u(x,0) =
    \begin{cases}
        1,   & \text{if } 0 \leq x \leq 1, \\
        0.1, & \text{otherwise}.
    \end{cases}
\end{equation*}

\begin{SCfigure}[0.8][!ht]
    \centering
    \includegraphics[width=0.6\textwidth]{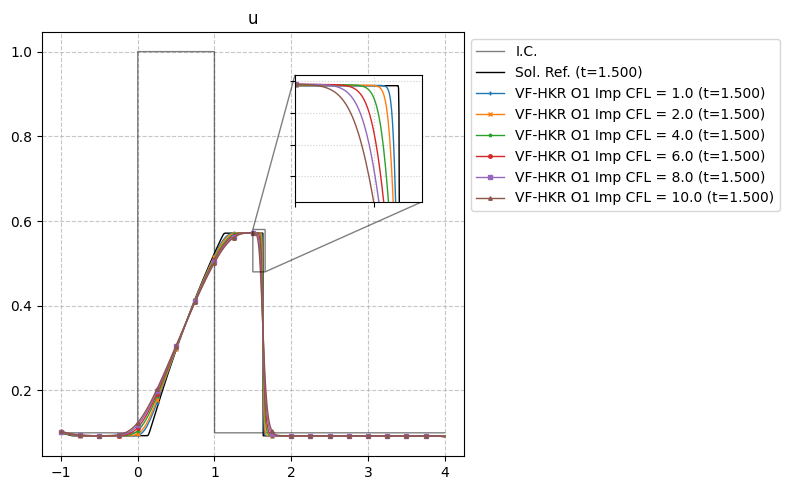}
    \caption{Numerical solutions for Test 3 (Burgers' equation) at $T=1.5$ using the implicit FV-HKR scheme of order 1 across different CFL numbers, with $N_x = 4000$ and $\Delta x=0.00125$.}
    \label{fig:test3_burgers}
\end{SCfigure}

The numerical results at $T = 1.5$ are shown in \cref{fig:test3_burgers}. The initial discontinuity evolves into a rarefaction wave on the left and a traveling shock wave on the right, while the negative source term dampens the overall amplitude of the solution over time. The results demonstrate that while the implicit scheme allows the use of CFL numbers well above $1$, increasing the time step introduces progressively larger amounts of numerical diffusion. This effect is clearly observed at the shock front, which becomes significantly more smeared as the CFL number increases from $1$ to $10$.

\medskip

\subsection{Test 4: Well-Balanced Property and Small Perturbations in Burgers' Equation} \label{subsec:test4_burgers}
This test verifies the well-balanced property of the proposed schemes and evaluates their accuracy when capturing small perturbations over a stationary background. To this end, we analyze two distinct scenarios: one focused on the exact preservation and long-term recovery of steady states, and another dedicated to the transient evolution of a traveling perturbation.

\smallskip

\textbf{Scenario A: Exact Preservation and Small Perturbation.}
In this scenario, we perform two separate numerical experiments. In the first experiment, we test the exact preservation of the stationary solution. The computational domain is $[a, b] = [-0.5, 0.5]$, simulated up to $T = 1$ with $N_x = 200$ cells, $\text{CFL} = 0.9$, and $\alpha = 1$. The initial condition is set exactly to the exponential equilibrium profile, proposed in \cite{castro2008well}:
\begin{equation*}
    u(x,0) = 0.1\exp(x).
\end{equation*}
As shown in \cref{tab:test4_exact}, the well-balanced schemes maintain the steady state to machine precision, confirming that the spatial reconstructions and the source term treatments are perfectly balanced.

In the second experiment, we introduce a localized transient perturbation over a different background steady state to verify empirical convergence rates. The domain is extended to $[a, b] = [-0.5, 1]$ and the simulation runs until $T = 2$, maintaining $\text{CFL} = 0.9$ and $\alpha = 1$. The initial condition includes a steep Gaussian bump:
\begin{equation*}
    u(x,0) = \exp(x) + 0.25 \exp({-1000\ (x - 0.8)^2}).
\end{equation*}
\cref{tab:test4_perturbed} displays the errors for this perturbed scenario, confirming that the perturbation successfully leaves the domain, and that the schemes are able to preserve the underlying, unperturbed steady solution up to machine accuracy.

\begin{table}[!ht]
    \centering
    % 1. Reduce el ancho de las minipages (ej. 0.4 o menos)
    \begin{minipage}[t]{0.4\textwidth}
        \centering
        % 2. El resizebox ahora escalará la tabla al nuevo ancho de la minipage
        \begin{tabular}{lr}
\toprule
Scheme & Err $L^1$ ($u$) \\
\midrule
FV-HKR O1 Exp & 2.85e-16 \\
FV-HKR O2 Exp & 2.83e-16 \\
FV-HKR O3 Exp & 8.95e-16 \\
FV-HKR O1 Imp & 2.42e-16 \\
FV-HKR O2 Imp & 2.80e-16 \\
SL-HKR O1 & 2.04e-16 \\
\bottomrule
\end{tabular}

        \caption{Test 4: $L^1$ errors for the unperturbed steady solution.}
        \label{tab:test4_exact}
    \end{minipage}
    % 3. CAMBIO CLAVE: Usa un espacio fijo en lugar de \hfill
    \hspace{2cm}
    \begin{minipage}[t]{0.4\textwidth}
        \centering
        \begin{tabular}{lr}
\toprule
Scheme & Err $L^1$ ($u$) \\
\midrule
FV-HKR O1 Exp & 8.41e-15 \\
FV-HKR O2 Exp & 1.73e-15 \\
FV-HKR O3 Exp & 3.73e-14 \\
FV-HKR O1 Imp & 5.81e-15 \\
FV-HKR O2 Imp & 4.51e-15 \\
SL-HKR O1 & 5.19e-15 \\
\bottomrule
\end{tabular}

        \caption{Test 4: $L^1$ errors for the perturbed steady solution, \\ after a long time.}
        \label{tab:test4_perturbed}
    \end{minipage}
\end{table}

\smallskip

\textbf{Scenario B: Transient Evolution of a Small Perturbation.}
In this second scenario, we compare the performance of the numerical models by tracking the movement of a perturbation traveling over a stationary background. The computational domain and parameters ($N_x = 200$, $\text{CFL} = 0.9$, and $\alpha = 1$) remain identical to those in the perturbed case of Scenario A. The background steady state is denoted by $u^e(x) = \exp(x)$. The initial condition $u(x,0)$ incorporates a similar localized perturbation, now centered at $x=0$:
\begin{equation*}
    u(x,0) = u^e(x) + 0.25 \exp({-1000 x^2}).
\end{equation*}

To evaluate the numerical schemes during the transient phase, the simulation is run up to $T = 0.3$, before the wave exits the domain. In order to effectively visualize the small moving perturbation without it being overshadowed by the background values, we plot the difference between the computed numerical solution and the exact stationary state, $u - u^e$, as depicted in \cref{fig:test4_burgers_perturbation}.

The results highlight the varying degrees of numerical diffusion introduced by the different schemes. The higher-order explicit methods, particularly the third-order scheme (FV-HKR O3 Exp), sharply capture the traveling shock and closely match the reference solution with minimal numerical dissipation. Conversely, the implicit scheme (FV-HKR O1 Imp) introduce significant smearing to the perturbation profile. The first-order explicit and semi-Lagrangian schemes display comparable, moderate levels of numerical diffusion.

\begin{SCfigure}[0.8][!ht]
    \centering
    \includegraphics[width=0.6\textwidth]{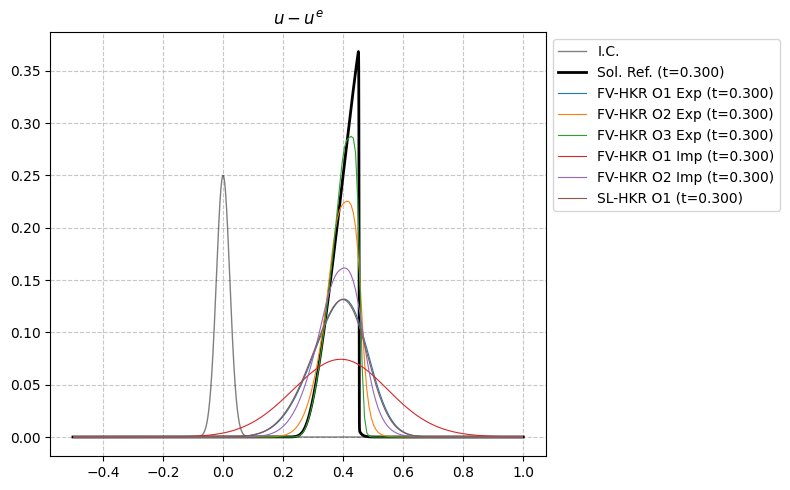}
    \caption{Numerical solutions for Test 4 (Burgers' equation) showing the transient perturbation ($u - u^e$) at $T=0.3$ with $N_x = 200$ and $\text{CFL}=0.9$.}
    \label{fig:test4_burgers_perturbation}
\end{SCfigure}

\medskip
\subsection{Test 5: Wave Scattering over a Submerged Bar for SWE} \label{subsec:test5_swe}
This test evaluates the ability of the explicit Finite Volume schemes to resolve smooth transmitted and reflected waves interacting with non-flat bottom topography. The computational domain is $[a, b] = [0, 50]$, simulated up to $T = 10$ using $N_x = 1000$ cells and $\text{CFL} = 0.8$. Free-flow boundaries are applied. The bathymetry consists of a submerged bar defined by:
\begin{equation*}
    H(x) =
    \begin{cases}
        -0.4 \cos^2\left(\frac{\pi (x - 32)}{4}\right), & \text{if } 30 \leq x \leq 34, \\
        0,                                              & \text{otherwise}.
    \end{cases}
\end{equation*}
The initial condition represents a Gaussian pulse on the free surface elevation $\eta = h - H$ over a fluid at rest, replicating the experimental setup from \cite{BejiBattjes1993}:
\begin{equation*}
    \eta(x,0) = 1 + 0.2 \exp({-(x - 5)^2}), \qquad q(x,0) = 0.
\end{equation*}

\begin{SCfigure}[0.8][!ht]
    \centering
    \includegraphics[width=0.7\textwidth]{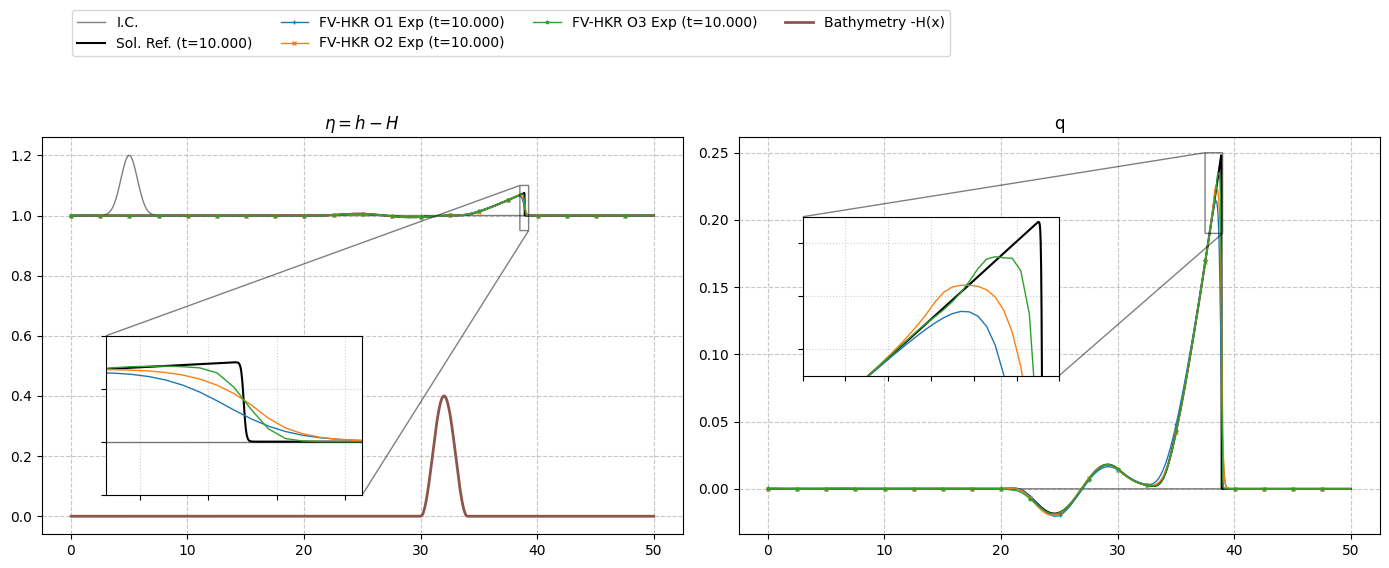}
    \caption{Numerical solutions for Test 5 (SWE) at $T=10.0$ showing the free surface elevation $\eta$ (left) and discharge $q$ (right). Computed using explicit FV-HKR schemes of orders 1, 2, and 3 with $N_x = 1000$, $\Delta x=0.05$, and $\text{CFL}=0.8$.}
    \label{fig:test5_swe}
\end{SCfigure}

The numerical results at $T = 10$ are shown in \cref{fig:test5_swe}. The left-travelling wave escapes from the computational domain whereas the initial right-traveling wave interacts with the submerged bar, splitting into a steepening transmitted wave and a smaller reflected wave moving leftwards. The first-order scheme exhibits numerical dissipation, underestimating the peak of the transmitted wave. The second-order and third-order schemes mitigate this dissipation, aligning more closely with the reference solution.

\medskip
\subsection{Test 6: Dam-Break over a Bottom Bump for SWE} \label{subsec:test6_swe}
This test examines the interaction of strong discontinuous waves with variable bottom topography. It assesses the ability of the explicit Finite Volume schemes to resolve complex flow structures without generating spurious oscillations. The computational domain is $[a, b] = [0, 20]$, simulated up to $T = 1$ using a relatively coarse mesh of $N_x = 200$ cells and $\text{CFL} = 0.5$, with free-flow boundary conditions. The bathymetry consists of a Gaussian bump centered at $x=12$, defined by:
\begin{equation*}
    H(x) = -\frac 1 2 \exp\left(-\frac 1 2(x - 12)^2\right).
\end{equation*}
The initial condition simulates a dam-break scenario over a fluid at rest, a standard benchmark in \cite{GoutalMaurel1997}, with a discontinuity in the free surface elevation $\eta = h - H$ at $x=10$:
\begin{equation*}
    \eta(x,0) =
    \begin{cases}
        2,   & \text{if } x \leq 10, \\
        0.6, & \text{otherwise},
    \end{cases}
    \qquad q(x,0) = 0.
\end{equation*}

\begin{SCfigure}[0.8][!ht]
    \centering
    \includegraphics[width=0.7\textwidth]{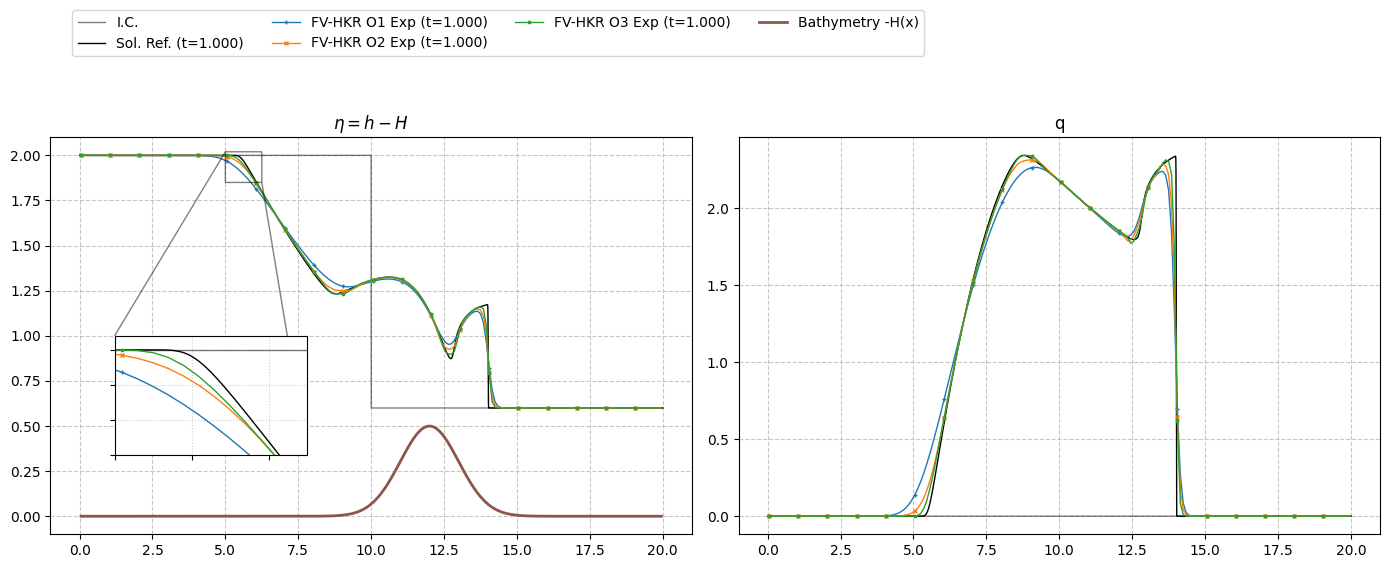}
    \caption{Numerical solutions for Test 6 (SWE) at $T=1$ showing the free surface elevation $\eta$ (left) and discharge $q$ (right). Computed using explicit FV-HKR schemes of orders 1, 2, and 3 with $N_x = 200$, $\Delta x=0.1$, and $\text{CFL}=0.5$.}
    \label{fig:test6_swe}
\end{SCfigure}

The numerical results at $T = 1$ are presented in \cref{fig:test6_swe}. This Riemann problem generates a right-traveling shock wave and a left-traveling rarefaction wave. As the shock propagates over the bump, it creates variations in the discharge. The first-order scheme suffers from excessive numerical diffusion, smoothing the head of the rarefaction wave and underpredicting the discharge peaks. In contrast, the second and third-order schemes again mitigate these shortcomings.

\medskip
\subsection{Test 7: Impact of Large CFL Numbers on the Semi-Lagrangian Scheme in SWE} \label{subsec:test7_swe}
This test investigates the effect of using large time steps on the numerical diffusion of the first-order Semi-Lagrangian scheme. The computational domain is $[a, b] = [0, 20]$, simulated up to $T = 2$. The spatial domain is discretized with $N_x = 2000$ cells. We test CFL numbers ranging from $1$ to $10$. The bathymetry represents a localized drop, defined by:
\begin{equation*}
    H(x) = -1 + 0.8 \exp\left(-(x - 10)^2\right).
\end{equation*}
The initial condition consists of a fluid at rest with a localized depression in the free surface elevation:
\begin{equation*}
    \eta(x,0) = 2 - 0.5 \exp\left(-2(x - 10)^2\right), \qquad q(x,0) = 0.
\end{equation*}

\begin{SCfigure}[0.8][!ht]
    \centering
    \includegraphics[width=0.7\textwidth]{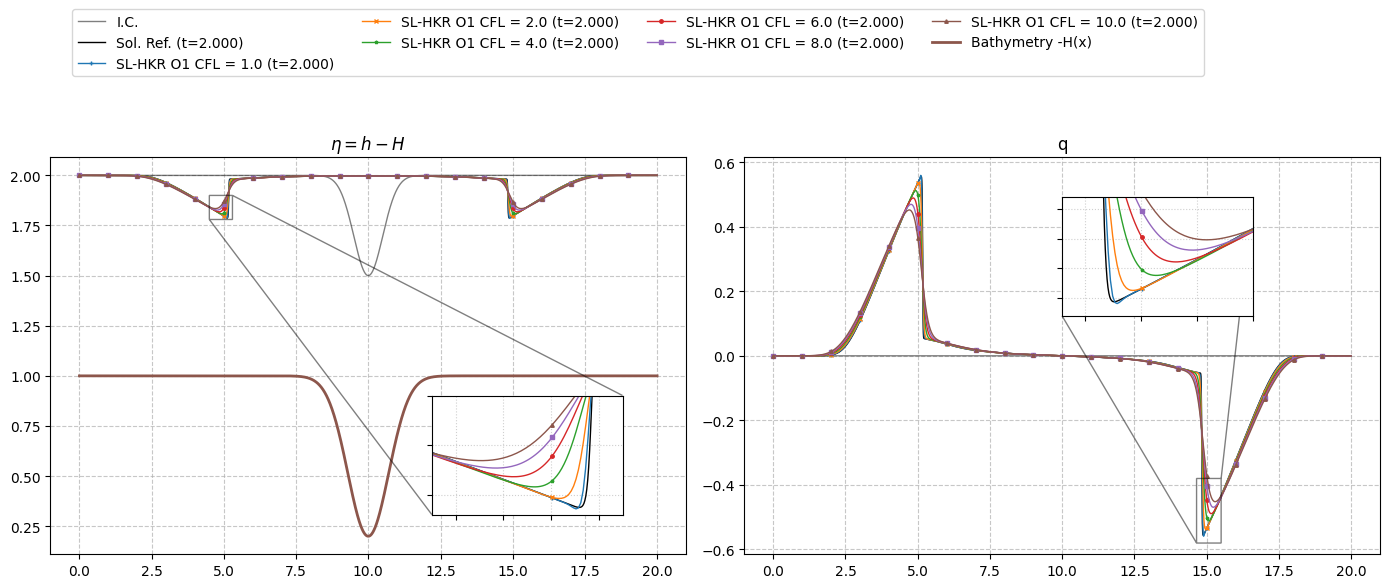}
    \caption{Numerical solutions for Test 7 (SWE) at $T=2$ showing the free surface elevation $\eta$ (left) and discharge $q$ (right). Computed using the SL-HKR scheme of order 1 across different CFL numbers, with $N_x = 2000$ and $\Delta x=0.01$.}
    \label{fig:test7_swe}
\end{SCfigure}

The numerical results at $T = 2$ are presented in \cref{fig:test7_swe}. The initial free surface depression collapses, generating two steep, outward-traveling waves. The interaction with the non-flat bathymetry produces similarly steep discharge profiles. The Semi-Lagrangian scheme demonstrates robust unconditional stability, generating no spurious reflections, even at a CFL of $10$. However, increasing the CFL number increases the numerical diffusion, as expected. This effect is clearly visible at the steep wave fronts, which become progressively more smeared as larger time steps are employed.

\medskip
\subsection{Test 8: Well-Balanced Property (Lake at Rest) over a Complex Bathymetry for the SWE} \label{subsec:test8_swe}
This test verifies the exact well-balanced property of the proposed schemes for the lake at rest equilibrium over a smooth but highly oscillatory bottom topography. To challenge the spatial reconstruction operators, the bathymetry is generated via a cubic spline interpolation of $40$ randomly distributed control points. The computational domain is $[a, b] = [-5, 5]$, discretized with $N_x = 200$ cells, and simulated up to $T = 1$ using $\text{CFL} = 0.9$. The initial condition represents water at rest, with a constant free surface:
\begin{equation*}
    \eta(x,0) = 1, \qquad q(x,0) = 0.
\end{equation*}
\cref{fig:test8_swe,tab:test8_swe} present the numerical solutions and the $L^1$ errors, respectively. As expected, despite the highly variable topography, the schemes are exactly well-balanced.

    % {\color{red} Why to use CFL=0.9 for the implicit solvers?}

% Dejamos un poco de espacio antes
\vspace{1em}
\noindent
% Usamos [t] (top) y \linewidth en lugar de \textwidth
\begin{minipage}[t]{0.55\linewidth}
    \vspace{0pt} % TRUCO: Fuerza el anclaje en la parte superior exacta
    \centering
    \includegraphics[width=\linewidth]{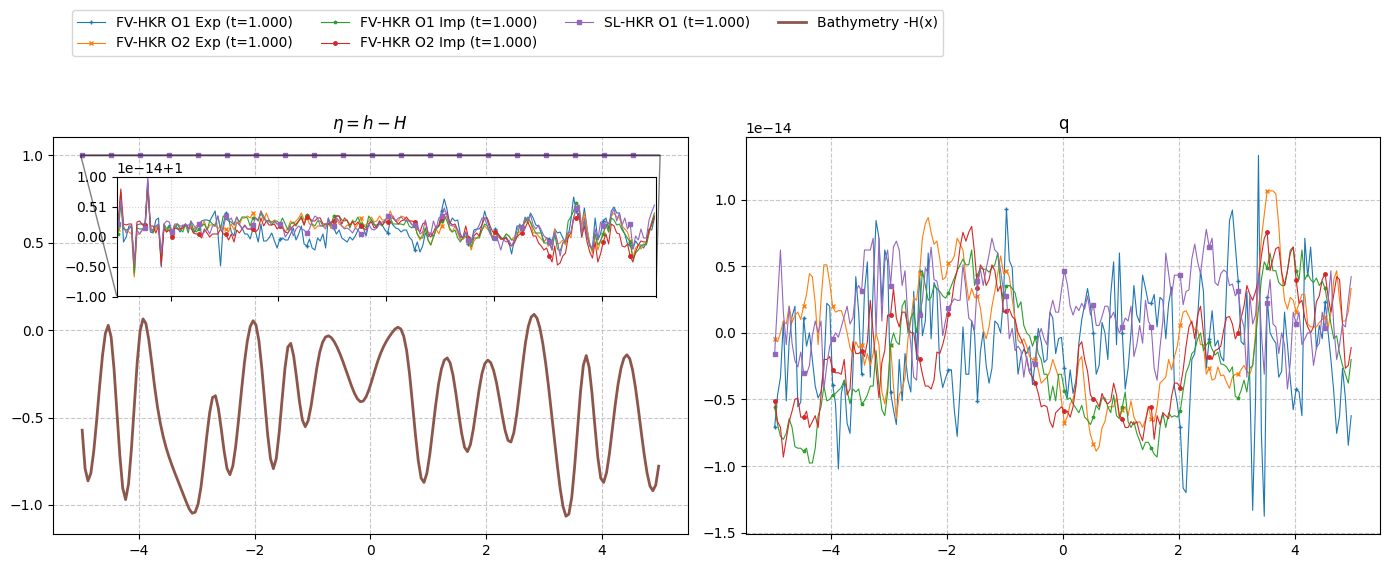}

    \captionof{figure}{Numerical solutions for Test 8 at $T=1$ displaying deviations strictly bounded to machine precision noise around the exact lake at rest steady state.}
    \label{fig:test8_swe}
\end{minipage}% <-- El símbolo % aquí es vital para que no haya un espacio extra oculto
\hfill
\begin{minipage}[t]{0.4\linewidth}
    \vspace{0pt} % TRUCO: Iguala el anclaje con la imagen de al lado
    \centering
    \begin{tabular}{lrr}
\toprule
Scheme & Err $L_1$ ($h$) & Err $L_1$ ($q$) \\
\midrule
FV-HKR O1 Exp & 1.74e-15 & 3.39e-15 \\
FV-HKR O2 Exp & 1.89e-15 & 3.45e-15 \\
FV-HKR O3 Exp & 6.01e-15 & 3.30e-14 \\
FV-HKR O1 Imp & 1.94e-15 & 4.30e-15 \\
FV-HKR O2 Imp & 1.86e-15 & 3.68e-15 \\
SL-HKR O1 & 1.94e-15 & 2.56e-15 \\
\bottomrule
\end{tabular}

    \captionof{table}{Test 8: $L^1$ errors for the exact stationary state in SWE ($T=1$).}
    \label{tab:test8_swe}
\end{minipage}
\vspace{1em} % Dejamos un poco de espacio después

\medskip
\subsection{Test 9: Small Perturbations over stationary solutions for the SWE} \label{subsec:test9_swe}
This final set of tests investigates the capacity of the proposed schemes to accurately resolve small transient waves traveling over different steady-state backgrounds. Standard non-well-balanced schemes typically fail this test because their truncation errors, which scale with the mesh size, are much greater than the small physical perturbations. We analyze two distinct scenarios: a perturbation over a lake at rest state, and a perturbation over a moving-water subcritical flow. In both cases, the computational domain is $[a, b] = [-5, 5]$ and the simulation runs up to $T = 100$ to thoroughly test long term stability. The spatial domain is discretized using $N_x = 200$ cells, with $\text{CFL} = 0.9$ and free-flow boundary conditions.
In these tests, the bottom topography is defined by the smooth function $H(x) = 1 - 0.5 \ \exp({-2x^2})$.

\smallskip

\textbf{Scenario A: Perturbation over Lake at Rest.}
The background steady state is water at rest, with $\eta^e(x) = 1$ and $q^e(x) = 0$. We introduce a small Gaussian perturbation exclusively on the free surface, following the classical benchmark from \cite{LeVeque1998}:
\begin{equation*}
    \eta(x,0) = 1 + 0.05 \ \exp({-x^2}), \qquad q(x,0) = 0.
\end{equation*}
The initial perturbation splits into two small waves propagating in opposite directions out of the domain, leaving the original equilibrium state behind. As shown in \cref{tab:test9_lake}, all numerical schemes successfully recover the lake at rest state after the perturbations leave the domain, leaving residual errors on the order of machine precision.

\smallskip

\textbf{Scenario B: Perturbation over Subcritical Flow.}
The background steady state is now a subcritical flow characterized by a constant, nonzero discharge $q^e(x) = 1$ and an energy head parameter $E_0 = 0.5$. The initial condition is generated by perturbing the stationary water height $h^e(x)$ with a small Gaussian bump:
\begin{equation*}
    h(x,0) = h^e(x) + 0.05 \ \exp\Big{(}-\frac{(x + 2)^2}{2 \ (0.1)^2}\Big{)}, \qquad q(x,0) = 1.
\end{equation*}
This perturbation travels downstream. \cref{tab:test9_subcritical} displays the $L^1$ errors for $h$ and $q$ measured at $T=100$, after the perturbation has left the domain. The results confirm that our exactly well-balanced schemes maintain the subcritical moving-water equilibrium to machine precision.

% \todo{can you show a figure with the perturbation, with all the different schemes, to highlight the high-order accuracy?}

\vspace{1em}
\noindent
\begin{minipage}[t]{0.46\linewidth}
    \vspace{0pt} % TRUCO: Fuerza el anclaje en la parte superior exacta
    \centering
    \begin{tabular}{lrr}
\toprule
Scheme & Err $L_1$ ($h$) & Err $L_1$ ($q$) \\
\midrule
FV-HKR O1 Exp & 1.64e-15 & 8.69e-15 \\
FV-HKR O2 Exp & 2.69e-15 & 6.10e-15 \\
FV-HKR O3 Exp & 1.17e-14 & 2.67e-14 \\
FV-HKR O1 Imp & 2.73e-15 & 4.57e-15 \\
FV-HKR O2 Imp & 4.57e-15 & 8.07e-15 \\
SL-HKR O1 & 4.71e-15 & 9.92e-15 \\
\bottomrule
\end{tabular}

    \captionof{table}{Test 9: $L^1$ errors after a perturbation over the lake at rest state ($T=100$).}
    \label{tab:test9_lake}
\end{minipage}% <-- El símbolo % vital para no empujar la segunda caja
\hfill
\begin{minipage}[t]{0.46\linewidth}
    \vspace{0pt} % TRUCO: Iguala el anclaje con la tabla de al lado
    \centering
    \begin{tabular}{lrr}
\toprule
Scheme & Err $L_1$ ($h$) & Err $L_1$ ($q$) \\
\midrule
FV-HKR O1 Exp & 4.48e-16 & 1.76e-15 \\
FV-HKR O2 Exp & 3.75e-15 & 5.93e-15 \\
FV-HKR O3 Exp & 5.11e-14 & 2.28e-13 \\
FV-HKR O1 Imp & 6.68e-15 & 9.36e-15 \\
FV-HKR O2 Imp & 1.31e-14 & 9.98e-15 \\
SL-HKR O1 & 2.50e-15 & 9.61e-15 \\
\bottomrule
\end{tabular}

    \captionof{table}{Test 9: $L^1$ errors after a perturbation over the subcritical flow state ($T=100$).}
    \label{tab:test9_subcritical}
\end{minipage}
\vspace{1em}

\medskip
\subsection{Test 10: Perturbation over a Transcritical Flow for the SWE} \label{subsec:test10_swe}

This second perturbation test evaluates the robustness and exactly well-balanced nature of the kinetic schemes when dealing with transcritical moving-water equilibria. The flow regime transitions smoothly from subcritical to supercritical over the crest of the topography, passing exactly through a critical point. Resolving a transient perturbation that travels across this sonic transition without generating non-physical reflections or numerical instability is a notoriously difficult task for well-balanced schemes, see e.g.~\cite{MicBerClaFou2016}.

The computational domain is $[a, b] = [-5, 5]$, discretized with an odd number of cells $N_x = 201$ to ensure a cell center coincides exactly with the critical point at $x_c = 0$. The simulation runs up to $T = 60$ using $\text{CFL} = 0.9$ and free-flow boundary conditions. The bathymetry is the same as in \cref{subsec:test9_swe}.

The background steady state has a nonzero constant discharge $q_{e} = 1$. The specific energy is calculated as $E_{e} = -H(x_c) + \frac{3}{2} h_{crit}$, where $h_{crit} = (q_{e}^2 / g)^{1/3}$, ensuring that the flow is strictly transcritical. The exact water height $h^e(x)$ is computed by selecting the subcritical root of the Bernoulli cubic equation for $x \leq x_c$ and the supercritical root for $x > x_c$.
The initial condition introduces a Gaussian perturbation over the water height, located upstream in the subcritical region at $x_{pert} = -2$:
\begin{equation*}
    h(x,0) = h^e(x) + 0.05 \ \exp({-50(x + 2)^2}), \qquad q(x,0) = 1.
\end{equation*}

During the simulation, the perturbation splits. One wave travels upstream and exits the left boundary. The second wave travels downstream, accelerating as it crosses the critical point $x_c=0$ into the supercritical region, and eventually exits the right boundary. To prevent the root-finding algorithm of the exactly well-balanced reconstruction from failing when the transient perturbation crosses the sonic point, we locally enforce the unperturbed exact stationary solution $u^e(x)$ as the steady state at the problematic cell. This localized treatment is applied at the central cell for the 1st and 2nd-order schemes, and across the three central cells for the 3rd-order scheme.

% El parámetro [1.0] indica que el caption tendrá el mismo ancho que la tabla.
% Puedes subirlo a [1.5] o [2.0] si quieres que el texto ocupe más espacio horizontal.
\begin{SCtable}[1.0][!ht]
    \centering
    \begin{tabular}{lrr}
\toprule
Scheme & Err $L_1$ ($h$) & Err $L_1$ ($q$) \\
\midrule
FV-HKR O1 Exp & 1.31e-15 & 2.99e-15 \\
FV-HKR O2 Exp & 1.44e-15 & 4.76e-15 \\
FV-HKR O3 Exp & 7.30e-15 & 3.54e-14 \\
FV-HKR O1 Imp & 1.65e-15 & 6.65e-15 \\
FV-HKR O2 Imp & 1.33e-15 & 6.91e-15 \\
SL-HKR O1 & 9.99e-16 & 4.03e-15 \\
\bottomrule
\end{tabular}

    \caption{Test 10: $L^1$ errors after a perturbation over the transcritical flow state ($T=60$).}
    \label{tab:test10_transcritical}
\end{SCtable}

\cref{tab:test10_transcritical} presents the $L^1$ errors for $h$ and $q$ measured at the end of the simulation, once the transient waves have entirely left the computational domain. The results confirm that, aided by the local treatment at the sonic point, all proposed kinetic schemes accurately preserve the transcritical equilibrium state up to machine precision, effectively resolving the complex nonlinear wave interactions.

% \todo{can you show a figure with the perturbation, with all the different schemes, to highlight the high-order accuracy? preferably before and after the perturbation has crossed the critical point}

\medskip
\subsection{Test 11: Empirical Order Convergence Analysis for the SWE} \label{subsec:test11_swe}
This last test on the SWE assesses the empirical order of accuracy for all six proposed kinetic schemes (explicit, implicit, and semi-Lagrangian). To properly measure the convergence rates, the simulation must remain in a smooth transient regime before any shock waves develop. Therefore, the final simulation time is restricted to $T = 0.3$.
The computational domain is set to $[a, b] = [-5, 5]$ with a CFL number of $0.9$. The bathymetry is defined as in \cref{subsec:test9_swe}.
The initial condition consists is smooth, with the fluid initially at rest:
\begin{equation*}
    h(x,0) = 1 + e^{-x^2}, \qquad q(x,0) = 0.
\end{equation*}

\begin{table}[!ht]
    \centering
    % ROW 1: Explicit O1 and Explicit O2
    \begin{minipage}[t]{0.45\textwidth}
        \centering
        \resizebox{\textwidth}{!}{
            \begin{tabular}{rrlrl}
\toprule
$N_x$ & Err $L_1$ ($h$) & Ord $L_1$ ($h$) & Err $L_1$ ($q$) & Ord $L_1$ ($q$) \\
\midrule
50 & 0.107325 & - & 0.504941 & - \\
100 & 0.050750 & 1.080504 & 0.253125 & 0.996262 \\
200 & 0.026070 & 0.960993 & 0.128392 & 0.979296 \\
400 & 0.013038 & 0.999633 & 0.063069 & 1.025557 \\
800 & 0.006345 & 1.039185 & 0.029789 & 1.082129 \\
\bottomrule
\end{tabular}

        }
        \caption{Test 11: order for the FV-HKR-O1-Exp scheme}
        \label{tab:test11_order_FV-HKR-O1-Exp}
    \end{minipage}
    \hfill
    \begin{minipage}[t]{0.45\textwidth}
        \centering
        \resizebox{\textwidth}{!}{
            \begin{tabular}{rrlrl}
\toprule
$N_x$ & Err $L_1$ ($h$) & Ord $L_1$ ($h$) & Err $L_1$ ($q$) & Ord $L_1$ ($q$) \\
\midrule
50 & 0.145845 & - & 0.656847 & - \\
100 & 0.050665 & 1.525360 & 0.236157 & 1.475808 \\
200 & 0.015605 & 1.699021 & 0.073160 & 1.690630 \\
400 & 0.004695 & 1.732911 & 0.020429 & 1.840412 \\
800 & 0.001163 & 2.013209 & 0.004426 & 2.206628 \\
\bottomrule
\end{tabular}

        }
        \caption{Test 11: order for the FV-HKR-O2-Exp scheme}
        \label{tab:test11_order_FV-HKR-O2-Exp}
    \end{minipage}

    \vspace{0.2cm} % Spacing between rows

    % ROW 2: Explicit O3 and Implicit O1
    \begin{minipage}[t]{0.45\textwidth}
        \centering
        \resizebox{\textwidth}{!}{
            \begin{tabular}{rrlrl}
\toprule
$N_x$ & Err $L_1$ ($h$) & Ord $L_1$ ($h$) & Err $L_1$ ($q$) & Ord $L_1$ ($q$) \\
\midrule
50 & 0.072543 & - & 0.322786 & - \\
100 & 0.012993 & 2.481057 & 0.060916 & 2.405678 \\
200 & 0.002628 & 2.305665 & 0.012362 & 2.300895 \\
400 & 0.000403 & 2.704671 & 0.001893 & 2.706970 \\
800 & 0.000056 & 2.860462 & 0.000262 & 2.855699 \\
\bottomrule
\end{tabular}

        }
        \caption{Test 11: order for the FV-HKR-O3-Exp scheme}
        \label{tab:test11_order_FV-HKR-O3-Exp}
    \end{minipage}
    \hfill
    \begin{minipage}[t]{0.45\textwidth}
        \centering
        \resizebox{\textwidth}{!}{
            \begin{tabular}{rrlrl}
\toprule
$N_x$ & Err $L_1$ ($h$) & Ord $L_1$ ($h$) & Err $L_1$ ($q$) & Ord $L_1$ ($q$) \\
\midrule
50 & 0.396549 & - & 2.806795 & - \\
100 & 0.260167 & 0.608060 & 1.859321 & 0.594148 \\
200 & 0.154762 & 0.749390 & 1.108791 & 0.745788 \\
400 & 0.085888 & 0.849513 & 0.613520 & 0.853805 \\
800 & 0.045494 & 0.916794 & 0.323568 & 0.923041 \\
1600 & 0.023370 & 0.960990 & 0.165446 & 0.967711 \\
\bottomrule
\end{tabular}

        }
        \caption{Test 11: order for the FV-HKR-O1-Imp scheme}
        \label{tab:test11_order_FV-HKR-O1-Imp}
    \end{minipage}

    \vspace{0.2cm} % Spacing between rows

    % ROW 3: Implicit O2 and Semi-Lagrangian O1
    \begin{minipage}[t]{0.45\textwidth}
        \centering
        \resizebox{\textwidth}{!}{
            \begin{tabular}{rrlrl}
\toprule
$N_x$ & Err $L_1$ ($h$) & Ord $L_1$ ($h$) & Err $L_1$ ($q$) & Ord $L_1$ ($q$) \\
\midrule
50 & 0.239188 & - & 1.035790 & - \\
100 & 0.090327 & 1.404925 & 0.404688 & 1.355851 \\
200 & 0.032422 & 1.478191 & 0.145498 & 1.475813 \\
400 & 0.009275 & 1.805594 & 0.040594 & 1.841667 \\
800 & 0.002114 & 2.133040 & 0.010269 & 1.982901 \\
\bottomrule
\end{tabular}

        }
        \caption{Test 11: order for the FV-HKR-O2-Imp scheme}
        \label{tab:test11_order_FV-HKR-O2-Imp}
    \end{minipage}
    \hfill
    \begin{minipage}[t]{0.45\textwidth}
        \centering
        \resizebox{\textwidth}{!}{
            \begin{tabular}{rrlrl}
\toprule
$N_x$ & Err $L_1$ ($h$) & Ord $L_1$ ($h$) & Err $L_1$ ($q$) & Ord $L_1$ ($q$) \\
\midrule
50 & 0.148774 & - & 1.214817 & - \\
100 & 0.099366 & 0.582303 & 0.704989 & 0.785066 \\
200 & 0.057801 & 0.781647 & 0.377313 & 0.901838 \\
400 & 0.031678 & 0.867603 & 0.197131 & 0.936607 \\
800 & 0.016447 & 0.945666 & 0.099355 & 0.988492 \\
\bottomrule
\end{tabular}

        }
        \caption{Test 11: order for the SL-HKR-O1 scheme}
        \label{tab:test11_order_SL-HKR-O1}
    \end{minipage}

\end{table}

We display the $L^1$ errors and the corresponding empirical convergence orders for the water height $h$ and the discharge $q$. The results are organized across all spatial resolutions ($N_x$ from $50$ to $800$, and up to $1600$ for the first-order implicit scheme). As expected, the empirical orders successfully match their theoretical counterparts: the first-order schemes converge with $\mathcal{O}(\Delta x)$, the second-order schemes reach $\mathcal{O}(\Delta x^2)$, and the third-order explicit scheme achieves its designated $\mathcal{O}(\Delta x^3)$ accuracy. This validates the high-order accurate space and time discretizations employed in the kinetic relaxation framework. The corresponding tables are shown in \cref{tab:test11_order_FV-HKR-O1-Exp,tab:test11_order_FV-HKR-O2-Exp,tab:test11_order_FV-HKR-O3-Exp,tab:test11_order_FV-HKR-O1-Imp,tab:test11_order_FV-HKR-O2-Imp,tab:test11_order_SL-HKR-O1}.

\medskip
\subsection{Test 12: Euler equations with gravity: Shu-Osher Problem with a Gravitational Potential} \label{subsec:test12_euler}

This test, performed on the Euler equations with a gravitational potential, evaluates the ability of the explicit Finite Volume schemes to resolve the complex interaction between a strong shock wave, high-frequency density fluctuations, and an external gravitational field. The computational domain is $[a, b] = [-5, 5]$, simulated up to $T = 1.8$ with $N_x = 500$ cells, $\text{CFL} = 0.7$, and an adiabatic index $\gamma = 1.4$. Free-flow boundary conditions are applied. The gravitational potential features a smooth bump in the center of the domain:
\begin{equation*}
    H(x) =
    \begin{cases}
        0.2 - 0.05 x^2, & \text{if } {-2} < x < 2, \\
        0,              & \text{otherwise}.
    \end{cases}
\end{equation*}
The initial condition consists of a Mach 3 shock wave propagating to the right into a sinusoidal density field where the fluid is initially at rest. In this context, the Mach number 3 signifies that the shock front travels at three times the local speed of sound, characterizing a high-intensity compression regime. The initial state modifies the classical problem from \cite{ShuOsher1989} by adding a gravitational potential as in \cite{XinShu2012}:
\begin{equation*}
    (\rho, v, p)(x,0) =
    \begin{cases}
        (3.857143,\ 2.629369,\ 10.33333), & \text{if } x < -4, \\
        (1 + 0.2 \sin(5x),\ 0,\ 1),       & \text{otherwise}.
    \end{cases}
\end{equation*}
Note that the conserved variables $q$ and $E$ are computed from these primitive variables.

\vspace{1em}
\noindent
\begin{minipage}[c]{0.65\linewidth}
    \centering
    \includegraphics[width=\linewidth]{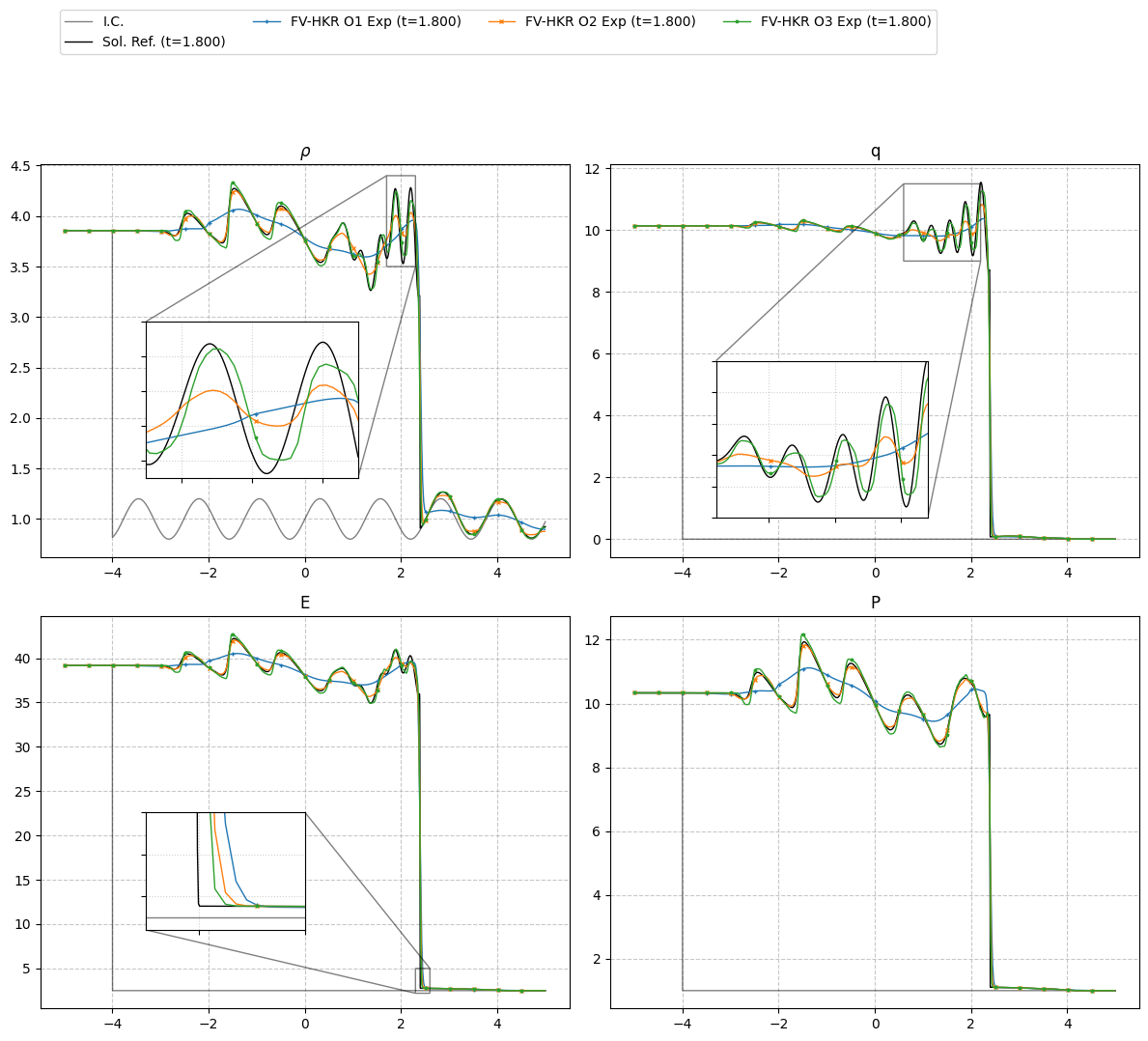}
\end{minipage}%
\hfill
\begin{minipage}[c]{0.30\linewidth}
    \captionof{figure}{Numerical solutions for Test 12 (Euler equations) at $T=1.8$ showing density $\rho$, momentum $q$, energy $E$, and pressure $p$. Computed using explicit FV-HKR schemes of orders 1, 2, and 3 with $N_x = 500$, $\Delta x=0.02$, and $\text{CFL}=0.7$.}
    \label{fig:test12_euler}
\end{minipage}
\vspace{1em}

The numerical results at $T = 1.8$ are presented in \cref{fig:test12_euler}. As the shock propagates, it compresses the density waves and simultaneously interacts with the gravitational potential bump. This interaction generates highly oscillatory physical structures behind the shock front, coupled with macroscopic flow variations driven by the source term. The first-order scheme exhibits excessive numerical diffusion, almost completely smearing out the high-frequency physical oscillations. The second-order scheme significantly improves wave capture but still visibly dampens the local extrema. The third-order scheme proves to be highly robust and accurate; it captures both the amplitude and phase of the entropy waves as well as the steep shock front, despite the challenging nonlinear source term.

\medskip
\subsection{Test 13: Point Blast in a Stratified Atmosphere} \label{subsec:test13_euler}
This test evaluates the capability of the unconditionally stable kinetic schemes (implicit Finite Volume and Semi-Lagrangian) to resolve strong shock waves and contact discontinuities generated by a localized energy explosion. The test is performed in a stratified atmosphere subject to a constant gravitational field, modeled by the linear potential $H(x) = x$. The computational domain is $[a, b] = [0, 1]$, simulated up to $T = 0.1$ with a high-resolution mesh of $N_x = 2000$ cells, $\text{CFL} = 1$, and an adiabatic index $\gamma = 1.4$. Free-flow boundary conditions are applied.

The initial condition, consists of a fluid at rest in an isothermal hydrostatic state where $\rho = \exp(-x)$ and $q=0$, perturbed by a localized pressure spike at the center of the domain ($x_c = 0.5$) with a radius of $r_c = 0.05$ and an overpressure of $\Delta p = 10$:
\begin{equation*}
    (\rho, v, p)(x,0) =
    \begin{cases}
        (\exp({-x}),\ 0,\ \exp({-x}) + 10), & \text{if } |x - 0.5| \leq 0.05, \\
        (\exp({-x}),\ 0,\ \exp({-x})),      & \text{otherwise}.
    \end{cases}
\end{equation*}
The conserved variables $q$ and $E$ are once again initialized from these primitive variables.

\begin{SCfigure}[0.8][!ht]
    \centering
    \includegraphics[width=0.7\textwidth]{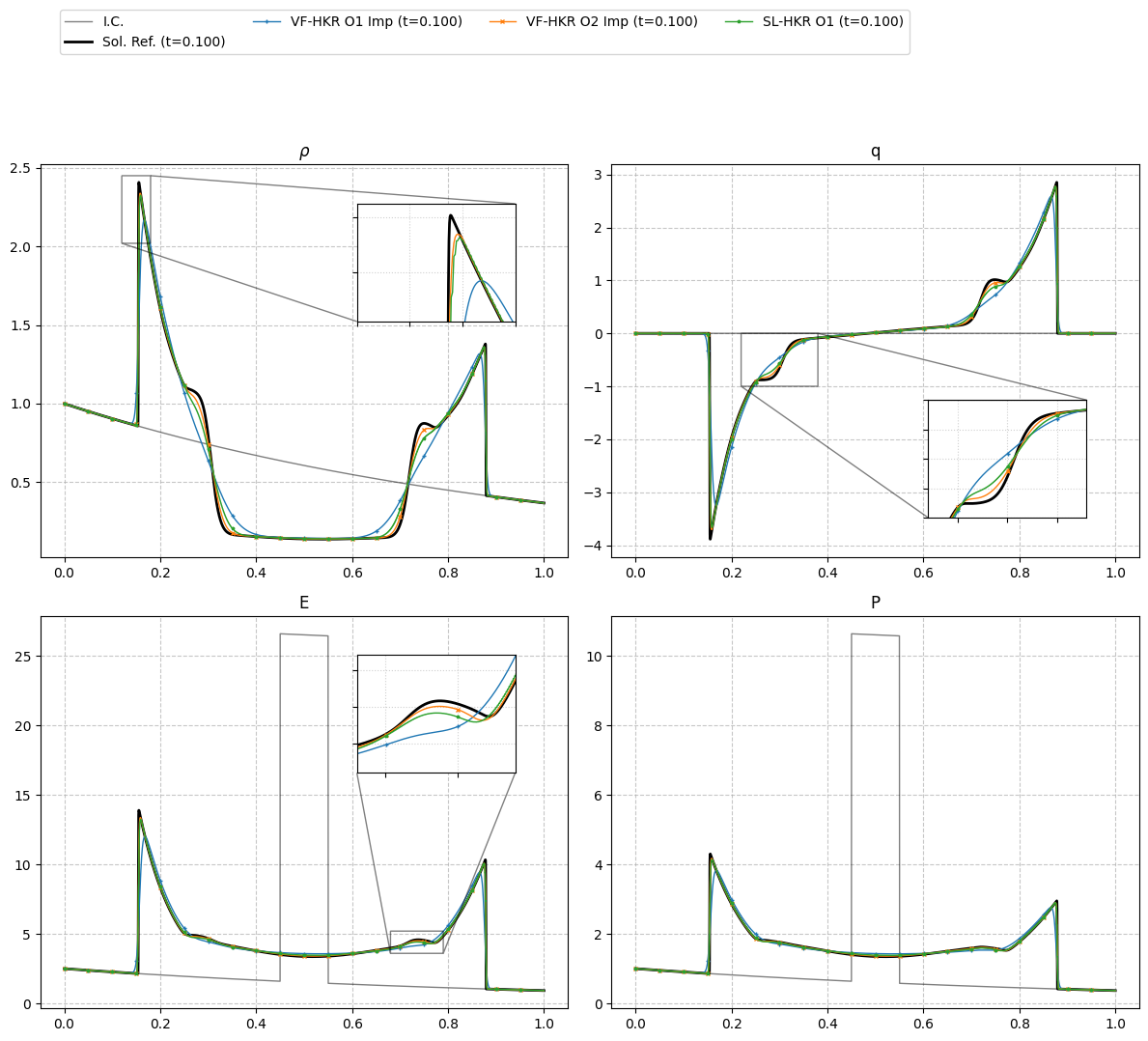}
    \caption{Numerical solutions for Test 13 (Euler equations) at $T=0.1$ showing density $\rho$, momentum $q$, energy $E$, and pressure $p$. Computed using the implicit FV-HKR (orders 1 and 2) and SL-HKR (order 1) schemes with $N_x = 2000$, $\Delta x=0.0005$, and $\text{CFL}=1$.}
    \label{fig:test13_euler}
\end{SCfigure}

The numerical results at $T = 0.1$ are shown in \cref{fig:test13_euler}. The central overpressure drives two strong shock waves propagating outward in both directions, leaving behind a low-density, high-energy core bounded by contact discontinuities. As observed in the zoomed-in regions, the first-order implicit scheme suffers from significant numerical diffusion, heavily smearing the contact discontinuities and rounding off the sharp density peaks. The first-order Semi-Lagrangian scheme performs noticeably better, reducing the diffusion compared to its finite volume counterpart. The second-order implicit scheme provides the most accurate resolution, capturing the shock fronts and the intricate internal wave structures with minimal diffusion, reaching a good accuracy.
We note that the contact waves are visibly more smeared than the shock waves. This is due to our two-velocity kinetic representation, which effectively follows the fastest speeds (i.e., the shock waves, due to the subcharacteristic condition) and thus introduces more numerical diffusion on the slower contact waves. A more sophisticated kinetic representation with additional velocities could potentially mitigate this issue, but it is beyond the scope of this work.

\medskip
\subsection{Test 14: Impact of Large CFL Numbers on the Implicit Scheme of order 2 for the Sod Tube Problem with Gravitational Potential} \label{subsec:test14_euler}
This test investigates the behavior and temporal accuracy of the second-order implicit Finite Volume scheme when operating at large CFL numbers. We employ the classic Sod shock tube problem, but modified to include a constant gravitational as in \cref{subsec:test13_euler}. The computational domain is $[a, b] = [0, 1]$, simulated up to $T = 0.2$. To properly isolate the temporal truncation errors from the spatial ones, we use a very fine mesh of $N_x = 5000$ cells. The adiabatic index is $\gamma = 1.4$, and free-flow boundaries are applied. The initial condition consists of the classical shock tube \cite{Sod1978} adapted with a gravitational source term \cite{XinShu2012}:
\begin{equation*}
    (\rho, v, p)(x,0) =
    \begin{cases}
        (1,\ 0,\ 1),       & \text{if } x < 0.5, \\
        (0.125,\ 0,\ 0.1), & \text{otherwise}.
    \end{cases}
\end{equation*}

% \todo{can you instead run this as a Riemann problem on the equilibrium variables? this would also showcase the WB property as, before the waves reach the boundaries, the solution should be unperturbed}

\begin{SCfigure}[0.8][!ht]
    \centering
    \includegraphics[width=0.7\textwidth]{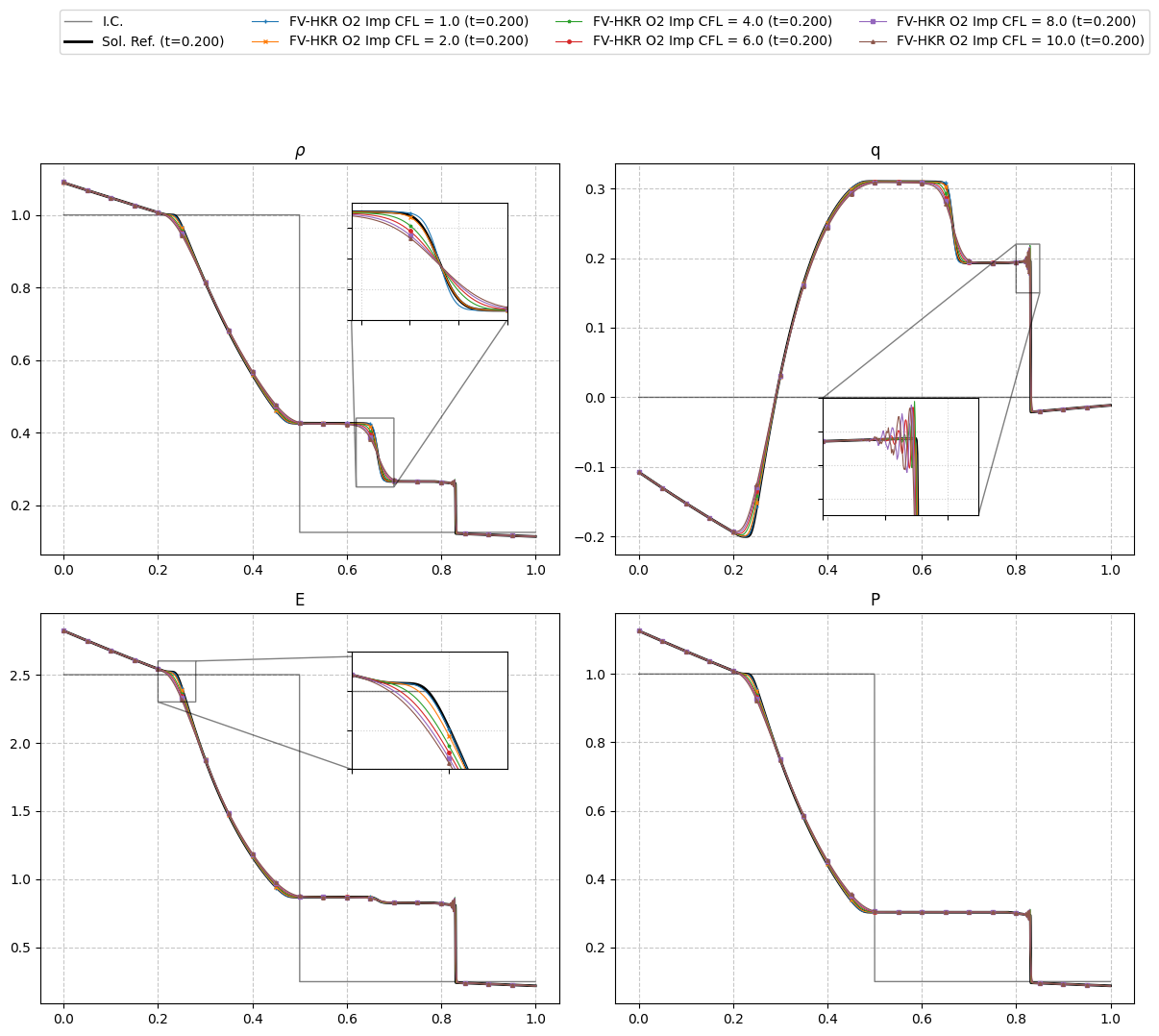}
    \caption{Numerical solutions for Test 14 (Euler equations) at $T=0.2$ under a linear gravitational potential. Computed using the implicit FV-HKR scheme of order 2 across varying CFL numbers, with $N_x = 5000$ and $\Delta x=0.0002$.}
    \label{fig:test14_euler}
\end{SCfigure}

The numerical results at $T = 0.2$ are shown in \cref{fig:test14_euler}. The Riemann problem develops the standard wave pattern: a left traveling rarefaction wave, a right traveling contact discontinuity, and a right traveling shock, which is further modulated by the gravitational source term. The implicit scheme demonstrates robust unconditional stability, successfully handling CFL numbers up to $10$ without blowing up. However, the zoomed in region in $\rho$ reveals that increasing the CFL number introduces temporal diffusion. Furthermore, as expected, minor dispersive oscillations begin to appear near the strong shock front and the contact wave at the CFL values greater than $2$, though the overall scheme remains bounded and stable.

\medskip
\subsection{Test 15: Well-Balanced Property and Stationary Perturbations in the Euler Equations} \label{subsec:test15_euler}
This final test verifies the exact well-balanced property of the proposed kinetic schemes for the Euler equations under a constant gravitational as in \cref{subsec:test13_euler}. We evaluate both the exact preservation of an isothermal hydrostatic equilibrium and the scheme capability to maintain stability when this equilibrium is subjected to a localized density perturbation. Both scenarios use a computational domain $[a, b] = [-1, 1]$ discretized with a coarse mesh of $N_x = 50$ cells, $\text{CFL} = 0.9$, $\gamma = 1.4$, and free-flow boundary conditions.

\textbf{Scenario A: Exact Preservation of the Isothermal Equilibrium.}
In the first case, the initial condition is set exactly to the isothermal hydrostatic steady state \cite{XinShu2012}:
\begin{equation*}
    \rho(x,0) = \exp({-x}), \qquad q(x,0) = 0, \qquad E(x,0) = \frac{\rho(x,0) + 1}{\gamma - 1}.
\end{equation*}
As shown in \cref{tab:test15_exact}, all proposed schemes, explicit, implicit, and Semi-Lagrangian, preserve the steady state to machine precision.

\textbf{Scenario B: Density Perturbation over the Equilibrium.}
In the second case, we introduce a localized Gaussian perturbation exclusively to the density field, while maintaining the initial momentum and energy distributions of the background state:
\begin{equation*}
    \rho(x,0) = \exp({-x}) + 0.4 \exp\Big{(}-200x^2\Big{)}, \qquad q(x,0) = 0, \qquad E(x,0) = \frac{\exp({-x})}{\gamma - 1}.
\end{equation*}
The simulation is run for an extremely long time ($T = 2000$) to allow all transient waves to radiate out of the computational domain through the transmissive boundaries, enabling the system to relax back to its original equilibrium.
\cref{tab:test15_perturbed} displays the $L^1$ errors measured at the end of the simulation relative to the unperturbed background state. The errors remain strictly bounded near machine precision. This result conclusively demonstrates that the well-balanced schemes successfully handle strong deviations without amplifying small perturbations into spurious long term oscillations.

\begin{table}[!ht]
    \centering
    \begin{minipage}[t]{0.48\textwidth}
        \centering
        \resizebox{\textwidth}{!}{
            \begin{tabular}{lrrr}
\toprule
Scheme & Err $L_1$ ($\rho$) & Err $L_1$ ($q$) & Err $L_1$ ($E$) \\
\midrule
FV-HKR O1 Exp & 8.14e-16 & 4.57e-16 & 3.92e-15 \\
FV-HKR O2 Exp & 3.15e-15 & 1.08e-15 & 4.88e-15 \\
FV-HKR O3 Exp & 2.68e-15 & 2.13e-15 & 1.03e-14 \\
FV-HKR O1 Imp & 7.81e-16 & 4.88e-16 & 3.83e-15 \\
FV-HKR O2 Imp & 3.93e-16 & 6.88e-16 & 2.39e-15 \\
SL-HKR O1 & 3.71e-15 & 8.61e-16 & 9.87e-15 \\
\bottomrule
\end{tabular}

        }
        \makeatletter\def\@captype{table}\makeatother
        \caption{Test 15: $L^1$ errors for the exact stationary state ($T=1$).}
        \label{tab:test15_exact}
    \end{minipage}
    \hfill
    \begin{minipage}[t]{0.48\textwidth}
        \centering
        \resizebox{\textwidth}{!}{
            \begin{tabular}{lrrr}
\toprule
Scheme & Err $L_1$ ($\rho$) & Err $L_1$ ($q$) & Err $L_1$ ($E$) \\
\midrule
FV-HKR O1 Exp & 6.30e-16 & 5.15e-16 & 2.05e-15 \\
FV-HKR O2 Exp & 2.05e-13 & 1.16e-14 & 2.82e-13 \\
FV-HKR O3 Exp & 2.03e-13 & 1.19e-14 & 2.81e-13 \\
FV-HKR O1 Imp & 7.23e-15 & 5.72e-15 & 1.61e-14 \\
FV-HKR O2 Imp & 1.43e-14 & 4.34e-15 & 2.33e-14 \\
SL-HKR O1 & 2.01e-14 & 1.10e-14 & 3.35e-14 \\
\bottomrule
\end{tabular}

        }
        \makeatletter\def\@captype{table}\makeatother
        \caption{Test 15: $L^1$ errors after a density perturbation ($T=2000$).}
        \label{tab:test15_perturbed}
    \end{minipage}
\end{table}

% \todo{same remark, it would be nice to have a figure showing the perturbation as it travels, for each scheme, to highlight the high-order accuracy}

\subsection{Test 16: Riemann Problem on Equilibrium Variables in the Euler Equations}

To further demonstrate the robustness and exact well-balanced property of the proposed kinetic schemes, we simulate a Riemann problem constructed over two distinct isothermal hydrostatic equilibrium states. Unlike standard shock tube tests that use constant primitive variables, which immediately violate the hydrostatic balance and trigger artificial global fluid motion under a gravitational field, this setup ensures that regions unreached by the propagating waves remain perfectly unperturbed.

We consider the computational domain $[0, 1]$ subject to a linear gravitational potential $H(x) = x$. The simulation is performed using $N_x = 500$ cells, a heat capacity ratio $\gamma = 1.4$, and transmissive boundary conditions. The initial condition is defined by joining two separate hydrostatic equilibria at the discontinuity $x = 0.5$:
\begin{equation*}
    \rho(x, 0) = \begin{cases}
        \exp({-x}),       & \text{if } x < 0.5, \\
        0.125 \exp({-x}), & \text{otherwise},
    \end{cases}
    \qquad q(x, 0) = 0,
    \qquad E(x, 0) = \frac{\rho(x, 0)}{\gamma - 1}.
\end{equation*}
The simulation is run until a short final time $T = 0.1$. This time is deliberately chosen so that the acoustic and shock waves originating from the center do not reach the boundaries, leaving the fluid in the lateral intervals $[0, 0.1]$ and $[0.9, 1]$ completely immune to the central perturbation. In these outer regions, the fluid must remain in exact hydrostatic equilibrium with strictly zero velocity.

\cref{fig:test16_euler} and \cref{tab:test16_euler} present the numerical solutions and the error analysis for this test. The figure displays the macroscopic variables, highlighting the zero-velocity regions near the boundaries, while the zooms confirm the expected sharper shock-capturing capabilities of the higher-order methods in the central transition. The accompanying table reports the $L^1$ errors evaluated exclusively within the unperturbed intervals $[0, 0.1]$ and $[0.9, 1]$. The results confirm that all the proposed explicit and implicit schemes successfully prevent the generation of spurious waves, maintaining the background steady state within machine precision in the areas where the physical waves have not yet arrived.

\noindent
\begin{minipage}[c]{0.48\linewidth}
    \vspace{0pt}
    \centering
    % Recuerda cambiar la ruta por la de tu gráfica real
    \includegraphics[width=\linewidth]{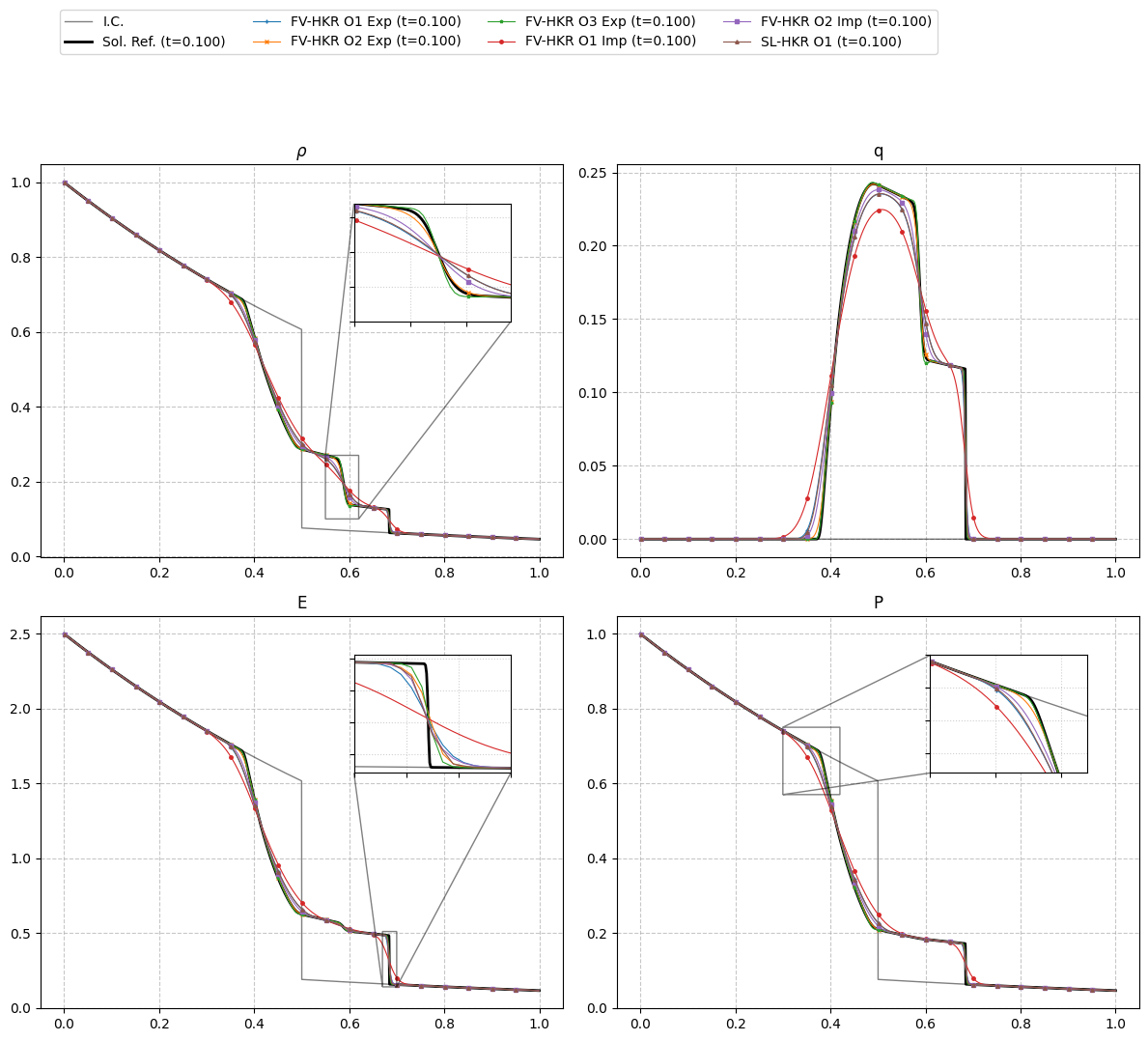}
    \captionof{figure}{Numerical solutions for Test 16 at $T=0.1$ under a linear gravitational potential comparing all schemes. $N_x=500$ and $\Delta x=0.002$.}
    \label{fig:test16_euler}
\end{minipage}%
\hfill
\begin{minipage}[c]{0.48\linewidth}
    \vspace{0pt}
    \centering
    \resizebox{\linewidth}{!}{
        % Recuerda cambiar la ruta por la de tu tabla real
        \begin{tabular}{lrrr}
\toprule
Scheme & Err $L_1$ ($\rho$) & Err $L_1$ ($q$) & Err $L_1$ ($E$) \\
\midrule
FV-HKR O1 Exp & 2.89e-17 & 1.05e-16 & 5.27e-17 \\
FV-HKR O2 Exp & 4.47e-17 & 1.28e-16 & 1.11e-16 \\
FV-HKR O3 Exp & 6.52e-16 & 3.28e-16 & 1.08e-15 \\
FV-HKR O1 Imp & 3.18e-16 & 3.73e-16 & 1.13e-15 \\
FV-HKR O2 Imp & 7.15e-17 & 1.49e-16 & 1.15e-16 \\
SL-HKR O1 & 1.71e-16 & 4.48e-16 & 6.13e-16 \\
\bottomrule
\end{tabular}

    }

    \captionof{table}{$L^1$ errors for the exact stationary state measured exclusively in $[0,0.1]\cup[0.9,1]$.}
    \label{tab:test16_euler}
\end{minipage}
\vspace{1em}

\section{Conclusions}
\label{sec:conclusions}

In this paper, we presented a novel procedure to make vectorial kinetic relaxation schemes well-balanced for hyperbolic systems with source terms. The key idea is to construct a kinetic representation that exactly captures the equilibrium variables, which are the natural variables in which the steady states are expressed. By doing so, we designed both a finite volume and a semi-Lagrangian scheme that are provably well-balanced for any balance law. We conducted numerical experiments on Burgers' equation, the shallow water equations with bathymetry, and the Euler equations with gravitational potential. They confirmed the theoretical properties and showcased the high-order accuracy and robustness of the schemes in various scenarios, while maintaining the well-balanced property to machine precision, even for moving steady solutions in the SWE.

The methodology is readily extensible to higher-dimensional problems, as the kinetic representation can be constructed in a dimension-by-dimension manner, and well-balanced relaxation operators can be defined in higher dimensions. Future work would focus on extending the schemes to two and three dimensions, as well as exploring more complex kinetic representations with additional velocities to further reduce numerical diffusion on contact waves in the Euler equations.

\section*{Acknowledgments}
This work is supported by projects PCI2024-155061-2 and PID2022-137637NB-C21 funded by MCIN/\\AEI/10.13039/501100011033/ and ERDF A way of making Europe, and also by the Grant PPRO-FQM216-G-2023 funded by Consejería de Universidad, Investigación e Innovación and by ERDF Andalusia Program 2021-2027.
L. Ávila León is supported by the grant
PRE2022-000022 funded by MCIN/AEI/10.13039/501100011033 and FSE+.
This work was also funded by the Fog Research Institute under contract no.~FRI-454.  Manuel J. Castro thanks IMUS-Maria de Maeztu grant CEX2024-001517-M - Apoyo a Unidades de Excelencia María de Maeztu for supporting this research, funded by MICIU/\allowbreak AEI/\allowbreak 10.13039/\allowbreak 501100011033. The authors thank the support of the Centre National de Recherche Scientifique, International Research Project hyPerbolIC models, numerical AnalysiS and Scientific cOmputation (CNRS, IRP PICASSO).

\bibliographystyle{plain}
\bibliography{references}

@article{AbgLiu2024,
  author    = {Abgrall, R. and Liu, Y.},
  journal   = {SIAM J. Sci. Comput.},
  title     = {{A New Approach for Designing Well-Balanced Schemes for the Shallow Water Equations: A Combination of Conservative and Primitive Formulations}},
  year      = {2024},
  issn      = {1095-7197},
  number    = {6},
  pages     = {A3375--A3400},
  volume    = {46},
  _month_   = {#nov#},
  doi       = {10.1137/23m1624610},
  fjournal  = {SIAM Journal on Scientific Computing},
  publisher = {Society for Industrial & Applied Mathematics (SIAM)}
}

@article{AudBouBriKlePer2004,
  author   = {Audusse, E. and Bouchut, F. and Bristeau, M.-O. and Klein, R. and Perthame, B.},
  journal  = {SIAM J. Sci. Comput.},
  title    = {A fast and stable well-balanced scheme with hydrostatic reconstruction for shallow water flows},
  year     = {2004},
  issn     = {1064-8275},
  number   = {6},
  pages    = {2050--2065},
  volume   = {25},
  coden    = {SJOCE3},
  doi      = {10.1137/S1064827503431090},
  fjournal = {SIAM Journal on Scientific Computing},
  mrclass  = {76M12 (65M06 76B15)},
  mrnumber = {2086830 (2005f:76069)}
}

@article{AuPe00,
  author    = {D. Aregba-Driollet and R. Natalini},
  journal   = {SIAM J. Numer. Anal.},
  title     = {{Discrete Kinetic Schemes for Multidimensional Systems of Conservation Laws}},
  year      = {2000},
  _month_   = jan,
  number    = {6},
  pages     = {1973--2004},
  volume    = {37},
  doi       = {10.1137/s0036142998343075},
  publisher = {Society for Industrial {\&} Applied Mathematics ({SIAM})},
  url       = {https://doi.org/10.1137/s0036142998343075}
}

@article{BerBulFouMbaMic2022,
  author    = {C. Berthon and S. Bulteau and F. Foucher and M. M{\textquotesingle}Baye and V. Michel-Dansac},
  journal   = {SIAM J. Sci. Comput.},
  title     = {{A Very Easy High-Order Well-Balanced Reconstruction for Hyperbolic Systems with Source Terms}},
  year      = {2022},
  number    = {4},
  pages     = {A2506--A2535},
  volume    = {44},
  _month_   = {aug},
  doi       = {10.1137/21m1429230},
  publisher = {Society for Industrial {\&} Applied Mathematics ({SIAM})}
}

@article{BerMicTho2026,
  author        = {Berthon, C. and Michel-Dansac, V. and Thomann, A.},
  journal       = {Math. Comp.},
  title         = {Towards a fully well-balanced and entropy-stable scheme for the {E}uler equations with gravity: {P}reserving isentropic steady solutions},
  year          = {2026},
  pages         = {1251--1292},
  volume        = {95},
  _month_       = {#jun#},
  abstract      = {The present work concerns the derivation of a numerical scheme to approximate weak solutions of the Euler equations with a gravitational source term. The designed scheme is proved to be fully well-balanced since it is able to exactly preserve all moving equilibrium solutions, as well as the corresponding steady solutions at rest obtained when the velocity vanishes. Moreover, the proposed scheme is entropy-preserving since it satisfies all fully discrete entropy inequalities. In addition, in order to satisfy the required admissibility of the approximate solutions, the positivity of both approximate density and pressure is established. Several numerical experiments attest the relevance of the developed numerical method.},
  archiveprefix = {arXiv},
  copyright     = {Creative Commons Attribution Non Commercial No Derivatives 4.0 International},
  doi           = {10.1090/mcom/4088},
  eprint        = {2406.15051},
  fjournal      = {Mathematics of Computation},
  keywords      = {Numerical Analysis (math.NA), FOS: Mathematics, 65M08, 65M12, 76M12},
  primaryclass  = {math.NA},
  publisher     = {arXiv}
}

@article{BerVaz1994,
  author   = {Berm{\'u}dez, A. and V{\'a}zquez, M. E.},
  journal  = {Comput. Fluids},
  title    = {Upwind methods for hyperbolic conservation laws with source terms},
  year     = {1994},
  issn     = {0045-7930},
  number   = {8},
  pages    = {1049--1071},
  volume   = {23},
  coden    = {CPFLB1},
  doi      = {10.1016/0045-7930(94)90004-3},
  fjournal = {Computers \& Fluids. An International Journal},
  mrclass  = {76M20 (65M06)},
  mrnumber = {1314237 (95i:76065)}
}

@article{Bou1999,
  author    = {F. Bouchut},
  journal   = {J. Stat. Phys.},
  title     = {{Construction of BGK Models with a Family of Kinetic Entropies for a Given System of Conservation Laws}},
  year      = {1999},
  number    = {1/2},
  pages     = {113--170},
  volume    = {95},
  doi       = {10.1023/a:1004525427365},
  publisher = {Springer Science and Business Media {LLC}},
  url       = {https://doi.org/10.1023%2Fa%3A1004525427365}
}

@article{BriXin2020,
  author    = {J. Britton and Y. Xing},
  journal   = {J. Sci. Comput.},
  title     = {{High Order Still-Water and Moving-Water Equilibria Preserving Discontinuous Galerkin Methods for the Ripa Model}},
  year      = {2020},
  number    = {2},
  pages     = {30},
  volume    = {82},
  doi       = {10.1007/s10915-020-01134-y},
  publisher = {Springer Science and Business Media {LLC}}
}

@article{castro2008well,
  author     = {Castro, M. and Gallardo, J. M. and L{\'o}pez-Garc{\'{\i}}a, J. A. and Par{\'e}s, C.},
  journal    = {SIAM J. Numer. Anal.},
  title      = {Well-balanced high order extensions of {G}odunov's method for semilinear balance laws},
  year       = {2008},
  issn       = {0036-1429},
  number     = {2},
  pages      = {1012--1039},
  volume     = {46},
  coden      = {SJNAAM},
  doi        = {10.1137/060674879},
  fjournal   = {SIAM Journal on Numerical Analysis},
  mrclass    = {65M06 (35L60 35L65 76L05)},
  mrnumber   = {2383221},
  mrreviewer = {Roberto Natalini},
  url        = {http://dx.doi.org/10.1137/060674879}
}

@article{castro2020well,
  author    = {M. J. Castro and C. Par{\'{e}}s},
  journal   = {J. Sci. Comput.},
  title     = {{Well-Balanced High-Order Finite Volume Methods for Systems of Balance Laws}},
  year      = {2020},
  number    = {2},
  volume    = {82},
  doi       = {10.1007/s10915-020-01149-5},
  publisher = {Springer Science and Business Media {LLC}}
}

@article{CheLevLiu1994,
  author    = {G.-Q. Chen and C. D. Levermore and T.-P. Liu},
  journal   = {Comm. Pure Appl. Math.},
  title     = {Hyperbolic conservation laws with stiff relaxation terms and entropy},
  year      = {1994},
  _month_   = {jun},
  number    = {6},
  pages     = {787--830},
  volume    = {47},
  doi       = {10.1002/cpa.3160470602},
  publisher = {Wiley}
}

@article{coulette2019high,
  author    = {D. Coulette and E. Franck and Ph. Helluy and M. Mehrenberger and L. Navoret},
  journal   = {Comput. Fluids},
  title     = {High-order implicit palindromic discontinuous {G}alerkin method for kinetic-relaxation approximation},
  year      = {2019},
  pages     = {485--502},
  volume    = {190},
  doi       = {10.1016/j.compfluid.2019.06.007},
  publisher = {Elsevier {BV}}
}

@article{CraPupSemVis2017,
  author    = {Cravero, I. and Puppo, G. and Semplice, M. and Visconti, G.},
  journal   = {Math. Comput.},
  title     = {{CWENO}: {U}niformly accurate reconstructions for balance laws},
  year      = {2017},
  issn      = {0025-5718},
  number    = {312},
  pages     = {1689--1719},
  volume    = {87},
  _month_   = {#Nov#},
  doi       = {10.1090/mcom/3273},
  fjournal  = {Mathematics of Computation},
  publisher = {American Mathematical Society (AMS)}
}

@article{DesMas2021,
  author    = {V. Desveaux and A. Masset},
  journal   = {Commun. Math. Sci.},
  title     = {A fully well-balanced scheme for shallow water equations with {C}oriolis force},
  year      = {2022},
  number    = {7},
  pages     = {1875--1900},
  volume    = {20},
  doi       = {10.4310/cms.2022.v20.n7.a4},
  publisher = {International Press of Boston}
}

@article{DumZanGabPes2024,
  author    = {Dumbser, M. and Zanotti, O. and Gaburro, E. and Peshkov, I.},
  journal   = {J. Comput. Phys.},
  title     = {{A well-balanced discontinuous Galerkin method for the first-order Z4 formulation of the Einstein-Euler system}},
  year      = {2024},
  issn      = {0021-9991},
  pages     = {112875},
  volume    = {504},
  _month_   = {#may#},
  doi       = {10.1016/j.jcp.2024.112875},
  fjournal  = {Journal of Computational Physics},
  publisher = {Elsevier BV}
}

@article{FraMen2016,
  author    = {E. Franck and L. S. Mendoza},
  journal   = {J. Sci. Comput.},
  title     = {{Finite Volume Scheme with Local High Order Discretization of the Hydrostatic Equilibrium for the Euler Equations with External Forces}},
  year      = {2016},
  number    = {1},
  pages     = {314--354},
  volume    = {69},
  doi       = {10.1007/s10915-016-0199-4},
  publisher = {Springer Science and Business Media {LLC}}
}

@article{FraMicNav2024,
  author        = {Franck, E. and Michel-Dansac, V. and Navoret, L.},
  journal       = {J. Comput. Phys.},
  title         = {{Approximately well-balanced Discontinuous Galerkin methods using bases enriched with Physics-Informed Neural Networks}},
  year          = {2024},
  issn          = {0021-9991},
  pages         = {113144},
  volume        = {512},
  _month_       = {#may#},
  archiveprefix = {arXiv},
  doi           = {10.1016/j.jcp.2024.113144},
  eprint        = {2310.14754},
  fjournal      = {Journal of Computational Physics},
  primaryclass  = {math.NA},
  publisher     = {Elsevier BV}
}

@article{GalParCas2007,
  author     = {Gallardo, J. M. and Par{\'e}s, C. and Castro, M.},
  journal    = {J. Comput. Phys.},
  title      = {On a well-balanced high-order finite volume scheme for shallow water equations with topography and dry areas},
  year       = {2007},
  issn       = {0021-9991},
  number     = {1},
  pages      = {574--601},
  volume     = {227},
  coden      = {JCTPAH},
  doi        = {10.1016/j.jcp.2007.08.007},
  fjournal   = {Journal of Computational Physics},
  mrclass    = {76M12 (35L65 65M06 76B15)},
  mrnumber   = {2361537},
  mrreviewer = {Anargiros I. Delis},
  url        = {http://dx.doi.org/10.1016/j.jcp.2007.08.007}
}

@article{GerHelMic2022,
  author    = {P. Gerhard and Ph. Helluy and V. Michel-Dansac},
  journal   = {Comput. Math. Appl.},
  title     = {Unconditionally stable and parallel {D}iscontinuous {G}alerkin solver},
  year      = {2022},
  pages     = {116--137},
  volume    = {112},
  _month_   = {apr},
  doi       = {10.1016/j.camwa.2022.02.015},
  publisher = {Elsevier {BV}},
  url       = {https://hal.archives-ouvertes.fr/hal-03218086/document}
}

@article{GerHelMicWeb2024,
  author        = {Gerhard, P. and Helluy, Ph. and Michel-Dansac, V. and Weber, B.},
  journal       = {J. Sci. Comput.},
  title         = {{Parallel Kinetic Schemes for Conservation Laws, with Large Time Steps}},
  year          = {2024},
  issn          = {1573-7691},
  number        = {1},
  volume        = {99},
  _month_       = {#feb#},
  archiveprefix = {arXiv},
  doi           = {10.1007/s10915-024-02468-7},
  eprint        = {2212.11010},
  file          = {:http\://arxiv.org/pdf/2212.11010v1:PDF},
  fjournal      = {Journal of Scientific Computing},
  keywords      = {Numerical Analysis (math.NA), FOS: Mathematics, 65M60, 65Y05},
  month_        = {#dec#},
  primaryclass  = {math.NA},
  publisher     = {Springer Science and Business Media LLC}
}

@article{gomez2021collocation,
  author    = {I. G{\'{o}}mez-Bueno and M. J. Castro D{\'{\i}}az and C. Par{\'{e}}s and G. Russo},
  journal   = {Mathematics},
  title     = {{Collocation Methods for High-Order Well-Balanced Methods for Systems of Balance Laws}},
  year      = {2021},
  number    = {15},
  pages     = {1799},
  volume    = {9},
  _month_   = {jul},
  doi       = {10.3390/math9151799},
  publisher = {{MDPI} {AG}}
}

@article{gomez2021high,
  author    = {I. G{\'{o}}mez-Bueno and M. J. Castro and C. Par{\'{e}}s},
  journal   = {Appl. Math. Comput.},
  title     = {High-order well-balanced methods for systems of balance laws: a control-based approach},
  year      = {2021},
  pages     = {125820},
  volume    = {394},
  doi       = {10.1016/j.amc.2020.125820},
  _month_   = {apr},
  publisher = {Elsevier {BV}}
}

@article{gomez2021implicit,
  author    = {I. G{\'{o}}mez-Bueno and S. Boscarino and M. J. Castro and C. Par{\'{e}}s and G. Russo},
  journal   = {Appl. Numer. Math.},
  title     = {Implicit and semi-implicit well-balanced finite-volume methods for systems of balance laws},
  year      = {2023},
  pages     = {18--48},
  volume    = {184},
  _month_   = {feb},
  doi       = {10.1016/j.apnum.2022.09.016},
  fjournal  = {Applied Numerical Mathematics},
  publisher = {Elsevier {BV}}
}

@inbook{gomez2021well,
  author    = {Gómez-Bueno, I. and Castro, M. J. and Parés, C.},
  pages     = {57--77},
  publisher = {Springer International Publishing},
  title     = {{Well-Balanced Reconstruction Operator for Systems of Balance Laws: Numerical Implementation}},
  year      = {2021},
  isbn      = {9783030728502},
  booktitle = {Recent Advances in Numerical Methods for Hyperbolic PDE Systems},
  doi       = {10.1007/978-3-030-72850-2_3},
  issn      = {2199-305X}
}

@article{GreLeR1996,
  author     = {Greenberg, J. M. and LeRoux, A.-Y.},
  journal    = {SIAM J. Numer. Anal.},
  title      = {A well-balanced scheme for the numerical processing of source terms in hyperbolic equations},
  year       = {1996},
  issn       = {0036-1429},
  number     = {1},
  pages      = {1--16},
  volume     = {33},
  coden      = {SJNAAM},
  doi        = {10.1137/0733001},
  fjournal   = {SIAM Journal on Numerical Analysis},
  mrclass    = {65M06 (35L65 65M12)},
  mrnumber   = {1377240 (97c:65144)},
  mrreviewer = {Mohammad Asadzadeh}
}

@book{holden2010splitting,
  author    = {Holden, H. and Karlsen, K. H. and Lie, K.-A. and Risebro, N. H.},
  publisher = {EMS Press},
  title     = {{Splitting Methods for Partial Differential Equations with Rough Solutions: Analysis and MATLAB programs}},
  year      = {2010},
  isbn      = {9783037195789},
  _month_   = {#Apr#},
  doi       = {10.4171/078},
  issn      = {2523-5184},
  journal   = {EMS Series of Lectures in Mathematics}
}

@article{KazParRic2025,
  author    = {Kazolea, M. and Parés, C. and Ricchiuto, M.},
  journal   = {Comput. Fluids},
  title     = {{Approximate well-balanced WENO finite difference schemes using a global-flux quadrature method with multi-step ODE integrator weights}},
  year      = {2025},
  issn      = {0045-7930},
  pages     = {106646},
  volume    = {296},
  _month_   = {#jun#},
  doi       = {10.1016/j.compfluid.2025.106646},
  publisher = {Elsevier BV}
}

@article{Lee1979,
  author   = {{van Leer}, B.},
  journal  = {J. Comput. Phys.},
  title    = {{T}owards the {U}ltimate {C}onservative {D}ifference {S}cheme, {V}. {A S}econd {O}rder {S}equel to {G}odunov's {M}ethod},
  year     = {1979},
  pages    = {101--136},
  volume   = {32},
  fjournal = {Journal of Computational Physics}
}

@article{McLQui2002,
  author    = {R. I. McLachlan and G. R. W. Quispel},
  journal   = {Acta Numer.},
  title     = {Splitting methods},
  year      = {2002},
  _month_   = {jan},
  pages     = {341--434},
  volume    = {11},
  doi       = {10.1017/s0962492902000053},
  publisher = {Cambridge University Press ({CUP})}
}

@article{MicBerClaFou2016,
  author   = {Michel-Dansac, V. and Berthon, C. and Clain, S. and Foucher, F.},
  journal  = {Comput. Math. Appl.},
  title    = {A well-balanced scheme for the shallow-water equations with topography},
  year     = {2016},
  issn     = {0898-1221},
  number   = {3},
  pages    = {568--593},
  volume   = {72},
  doi      = {10.1016/j.camwa.2016.05.015},
  fjournal = {Computers \& Mathematics with Applications. An International Journal},
  mrclass  = {Preliminary Data},
  mrnumber = {3521058},
  url      = {https://hal.archives-ouvertes.fr/hal-01201825/document}
}

@article{MicBerClaFou2017,
  author  = {Michel-Dansac, V. and Berthon, C. and Clain, S. and Foucher, F.},
  journal = {J. Comput. Phys.},
  title   = {A well-balanced scheme for the shallow-water equations with topography or {M}anning friction},
  year    = {2017},
  pages   = {115--154},
  volume  = {335},
  doi     = {10.1016/j.jcp.2017.01.009},
  url     = {https://hal.archives-ouvertes.fr/hal-01247813/document}
}

@article{MicTho2025,
  author        = {Michel-Dansac, V. and Thomann, A.},
  journal       = {Comput. Fluids},
  title         = {Towards a fully well-balanced and entropy-stable scheme for the {E}uler equations with gravity: {G}eneral equations of state},
  year          = {2025},
  issn          = {0045-7930},
  pages         = {106853},
  volume        = {303},
  _month_       = {#sep#},
  archiveprefix = {arXiv},
  copyright     = {Creative Commons Attribution Non Commercial No Derivatives 4.0 International},
  doi           = {10.1016/j.compfluid.2025.106853},
  eprint        = {2410.19710},
  keywords      = {Numerical Analysis (math.NA), FOS: Mathematics, FOS: Mathematics, 65M08, 76M12},
  primaryclass  = {math.NA},
  publisher     = {Elsevier BV}
}

@article{SemTraPup2021,
  author    = {Semplice, M. and Travaglia, E. and Puppo, G.},
  journal   = {Commun. Appl. Math. Comput.},
  title     = {{One- and Multi-dimensional CWENOZ Reconstructions for Implementing Boundary Conditions Without Ghost Cells}},
  year      = {2021},
  issn      = {2661-8893},
  number    = {1},
  pages     = {143--169},
  volume    = {5},
  _month_   = {#Oct#},
  doi       = {10.1007/s42967-021-00151-4},
  fjournal  = {Communications on Applied Mathematics and Computation},
  publisher = {Springer Science and Business Media LLC}
}

@article{strang1968construction,
  author     = {Strang, G.},
  journal    = {SIAM J. Numer. Anal.},
  title      = {On the construction and comparison of difference schemes},
  year       = {1968},
  issn       = {0036-1429},
  pages      = {506--517},
  volume     = {5},
  doi        = {10.1137/0705041},
  fjournal   = {SIAM Journal on Numerical Analysis},
  mrclass    = {65.67},
  mrnumber   = {0235754},
  mrreviewer = {A. R. Gourlay},
  url        = {http://dx.doi.org/10.1137/0705041}
}

@article{XinShu2012,
  author    = {Xing, Y. and Shu, C.-W.},
  journal   = {J. Sci. Comput.},
  title     = {{High Order Well-Balanced WENO Scheme for the Gas Dynamics Equations Under Gravitational Fields}},
  year      = {2012},
  issn      = {1573-7691},
  number    = {2--3},
  pages     = {645--662},
  volume    = {54},
  _month_   = {#mar#},
  doi       = {10.1007/s10915-012-9585-8},
  fjournal  = {Journal of Scientific Computing},
  publisher = {Springer Science and Business Media LLC}
}

@article{XinShuNoe2011,
  author   = {Xing, Y. and Shu, C.-W. and Noelle, S.},
  journal  = {J. Sci. Comput.},
  title    = {On the advantage of well-balanced schemes for moving-water equilibria of the shallow water equations},
  year     = {2011},
  issn     = {0885-7474},
  number   = {1-3},
  pages    = {339--349},
  volume   = {48},
  coden    = {JSCOEB},
  doi      = {10.1007/s10915-010-9377-y},
  fjournal = {Journal of Scientific Computing},
  mrclass  = {65M08 (65M12)},
  mrnumber = {2811708 (2012e:65182)}
}

@article{XinZhaShu2010,
  author  = {Xing, Y. and Zhang, X. and Shu, C.-W.},
  journal = {Adv. Water Resour.},
  title   = {Positivity-preserving high order well-balanced discontinuous {G}alerkin methods for the shallow water equations},
  year    = {2010},
  issn    = {0309-1708},
  number  = {12},
  pages   = {1476--1493},
  volume  = {33},
  doi     = {10.1016/j.advwatres.2010.08.005},
  url     = {http://www.sciencedirect.com/science/article/pii/S0309170810001491}
}

@article{QiaHumLal1992,
  author    = {Qian, Y. H. and d'Humi{\`e}res, D. and Lallemand, P.},
  journal   = {Europhys. Lett.},
  title     = {{Lattice BGK Models for Navier-Stokes Equation}},
  year      = {1992},
  number    = {6},
  pages     = {479--484},
  volume    = {17},
  doi       = {10.1209/0295-5075/17/6/001},
  fjournal  = {Europhysics Letters (EPL)},
  publisher = {IOP Publishing}
}

@article{LeVeque1998,
  author    = {LeVeque, R. J.},
  journal   = {J. Comput. Phys.},
  title     = {Balancing Source Terms and Flux Gradients in High-Resolution {G}odunov Methods: {T}he Quasi-Steady Wave-Propagation Algorithm},
  year      = {1998},
  number    = {2},
  pages     = {346--365},
  volume    = {146},
  doi       = {10.1006/jcph.1998.6058},
  publisher = {Elsevier {BV}}
}

@article{ShuOsher1989,
  author    = {Shu, C.-W. and Osher, S.},
  journal   = {J. Comput. Phys.},
  title     = {Efficient implementation of essentially non-oscillatory shock-capturing schemes, {II}},
  year      = {1989},
  number    = {1},
  pages     = {32--78},
  volume    = {83},
  doi       = {10.1016/0021-9991(89)90222-2},
  publisher = {Elsevier}
}

@article{Sod1978,
  author    = {Sod, G. A.},
  journal   = {J. Comput. Phys.},
  title     = {A survey of several finite difference methods for systems of nonlinear hyperbolic conservation laws},
  year      = {1978},
  number    = {1},
  pages     = {1--31},
  volume    = {27},
  doi       = {10.1016/0021-9991(78)90023-2},
  publisher = {Elsevier}
}

@article{BejiBattjes1993,
  author    = {Beji, S. and Battjes, J. A.},
  journal   = {Coastal Eng.},
  title     = {Experimental investigation of wave propagation over a bar},
  year      = {1993},
  number    = {2},
  pages     = {151--162},
  volume    = {19},
  doi       = {10.1016/0378-3839(93)90022-Z},
  publisher = {Elsevier}
}

@inproceedings{GoutalMaurel1997,
  author    = {Goutal, N. and Maurel, F.},
  title     = {Proceedings of the 2nd Workshop on Dam-break Wave Simulation},
  booktitle = {Department Laboratoire National d'Hydraulique},
  year      = {1997},
  publisher = {Electricit\'{e} de France},
  address   = {Paris}
}

@article{XingShu2005,
  author  = {Xing, Yulong and Shu, Chi-Wang},
  title   = {High order finite difference {WENO} schemes with the exact conservation property for the shallow water equations},
  journal = {Journal of Computational Physics},
  year    = {2005},
  volume  = {208},
  number  = {1},
  pages   = {206--227},
  doi     = {10.1016/j.jcp.2005.02.006}
}

@article{WangShuYeeSjogreen2009,
  author  = {Wang, Wei and Shu, Chi-Wang and Yee, H. C. and Sj{\"o}green, Bj{\"o}rn},
  title   = {High-order well-balanced schemes and applications to non-equilibrium flow},
  journal = {Journal of Computational Physics},
  year    = {2009},
  volume  = {228},
  number  = {18},
  pages   = {6682--6702},
  doi     = {10.1016/j.jcp.2009.05.028}
}

\end{document}